\documentclass[10pt]{article}

\usepackage[a4paper,margin=1in]{geometry}
\usepackage{amsmath,amssymb,amsthm,mathtools}
\usepackage{microtype}
\usepackage[skip=0.45\baselineskip plus 2pt,indent=0pt]{parskip}
\usepackage{float}
\usepackage{tikz}
\usepackage{scalerel}
\usepackage[colorlinks=true,linkcolor=blue,citecolor=blue,urlcolor=blue]{hyperref}

\numberwithin{equation}{section}

\newtheorem{theorem}{Theorem}[section]
\newtheorem{lemma}[theorem]{Lemma}
\newtheorem{proposition}[theorem]{Proposition}
\newtheorem{corollary}[theorem]{Corollary}

\newtheorem{fact}[theorem]{Fact}
\newtheorem{definition}[theorem]{Definition}

\DeclareMathOperator{\op}{op}
\DeclareMathOperator{\rev}{rev}

\newcommand{\R}{\mathbb{R}}

\newcommand{\N}{\mathbb{N}}

\newsavebox\vvvbox
\savebox\vvvbox{\tikz{
    \draw[black,fill=black] (90:1) circle (.35);
    \draw[black,fill=black] (210:1) circle (.35);
    \draw[black,fill=black] (330:1) circle (.35);
    \draw[opacity=0] (0:1.2) circle (0.1);
}}
\newcommand{\vvv}{\mathord{\scaleobj{1.2}{\scalerel*{\usebox{\vvvbox}}{x}}}}
\newcommand{\pivvv}{\pi_{\vvv}}

\newcommand{\1}{\mathbf{1}}
\newcommand{\cF}{\mathcal F}
\newcommand{\cL}{\mathcal L}
\newcommand{\cM}{\mathcal M}
\newcommand{\PiU}{\Pi_{\vvv,\infty}}

\title{\Large\bf Intervals of uniform Tur\'an densities}
\author{%
Heng~Li\thanks{\scriptsize School of Mathematics, Shandong University, Jinan, China, and Extremal Combinatorics and Probability Group, Institute for Basic Science, Daejeon, South Korea. Email:~\texttt{heng.li@sdu.edu.cn}.}\and
Xizhi Liu\thanks{\scriptsize School of Mathematical Sciences, University of Science and Technology of China, Hefei, China.
    Email:~\texttt{liuxizhi@ustc.edu.cn}.}
\and
Oleg Pikhurko\thanks{\scriptsize Mathematics Institute and DIMAP, University of Warwick, Coventry CV4 7AL, UK.
    Email:~\texttt{pikhurko@gmail.com}.}}
\date{\today}

\begin{document}

\maketitle

\begin{abstract}
    We prove that the set $\PiU$ of uniform Tur\'an densities of possibly infinite families of $3$-graphs contains a terminal interval: there exists $\delta>0$ such that $[1-\delta,1]\subseteq\PiU$. Consequently, $\PiU$ has positive Lebesgue measure and Hausdorff dimension $1$.
\end{abstract}

%%%%%%%%%%%%%%%%%%%%%%%%%%%%%%%%%%%%%%%%%%%%%%%%%%%%
\section{Introduction}
\label{sec:introduction}
%%%%%%%%%%%%%%%%%%%%%%%%%%%%%%%%%%%%%%%%%%%%%%%%%%%%

Given $d\in[0,1]$ and $\eta>0$, a $3$-uniform hypergraph, or simply a $3$-graph, $H$ on $n$ vertices is called \emph{$(d,\eta,\vvv)$-dense} if for every subset $U\subseteq V(H)$ of vertices, the number of edges in the induced subgraph $H[U]$ satisfies $e_H(U)\ge d\binom{|U|}{3}-\eta n^3$. 
Thus the edge-density requirement is imposed simultaneously on every vertex subset of the host graph.
For a family $\cF$ of $3$-graphs, its \emph{uniform Tur\'an density} $\pivvv(\cF)$ is the supremum of the numbers $d$ such that, for every $\eta>0$ and $n_0\in\N$, there exists an $\cF$-free $(d,\eta,\vvv)$-dense $3$-graph on at least $n_0$ vertices.
We write
\[
\PiU
\coloneqq \{\pivvv(\cF)\colon \cF\text{ is a possibly infinite family of
    $3$-graphs}\}.
\]
We allow $\cF=\varnothing$, and hence $1\in\PiU$.

The uniformly dense Tur\'an problem was initiated by Erd\H{o}s and S\'os~\cite{ErdosSos1982}.
It belongs to a broader family of quasirandom Tur\'an problems in which density is tested on prescribed local configurations.
Determining exact uniform Tur\'an densities is difficult even for small forbidden $3$-graphs.
Glebov, Kr\'al' and Volec~\cite{GlebovKralVolec2016} proved that $\pivvv(K_4^{(3)-})=1/4$, where $K_4^{(3)-}$ is the tetrahedron with one edge removed.
Reiher, R\"odl and Schacht subsequently developed a systematic framework for such problems, including tetrahedron embeddings under vertex--pair density tests, weakly quasirandom $3$-graphs, and the more general $j$-uniform density conditions for $r$-graphs \cite{ReiherRodlSchacht2016,ReiherRodlSchacht2018,ReiherRodlSchacht2018Mantel}.
Their arguments use the regularity and counting lemmas from~\cite{RodlSchacht07a,RodlSchacht07b}, within the broader hypergraph regularity theory developed in~\cite{Gowers2006,RodlSkokan2004,NagleRodlSchacht2006}.
In particular, they re-proved that $\pivvv(K_4^{(3)-})=1/4$ by an independent regularity-based argument~\cite{ReiherRodlSchacht2018}.
Reiher, R\"odl and Schacht~\cite{ReiherRodlSchacht2018Vanishing} also characterized the $3$-graphs of vanishing uniform Tur\'an density and showed that the uniform Tur\'an density of every single $3$-graph is either $0$ or at least $1/27$.
Garbe, Kr\'al' and Lamaison~\cite{GarbeKralLamaison2024} proved that this first gap is sharp by constructing a $3$-graph with uniform Tur\'an density exactly $1/27$.
Buci\'c, Cooper, Kr\'al', Mohr and Munh\'a Correia~\cite{BucicCooperKralMohrMunhaCorreia2023} determined the uniform Tur\'an densities of all tight cycles of length at least $5$: the value is $0$ when the length is divisible by $3$, and $4/27$ otherwise.
Nevertheless, the uniform Tur\'an density of the tetrahedron $K_4^{(3)}$ remains unknown.
For broader accounts of uniformly dense and related restricted extremal problems, see the surveys of Reiher~\cite{Reiher2020} and Schacht~\cite{Schacht2022ICM}.

Beyond determining individual densities, one may ask about the global structure of the set of all attainable values.
There are parallel questions for several notions of Tur\'an density.
Ordinary Tur\'an density controls only the total edge count; its graph-theoretic foundations are the theorems of Tur\'an and Erd\H{o}s--Stone--Simonovits \cite{Turan1941,ErdosStone1946,ErdosSimonovits1966}, while the existence of the limiting density for hypergraphs follows from the averaging argument of Katona, Nemetz and Simonovits~\cite{KatonaNemetzSimonovits1964}.
See \cite{Furedi1991,Sidorenko1995,Keevash2011} for general background.
Degree-based variants instead require every fixed set of vertices to extend to sufficiently many edges.
These include the codegree and $\ell$-degree Tur\'an densities studied by Mubayi and Zhao~\cite{MubayiZhao07} and by Lo and Markstr\"om~\cite{LoMarkstrom14}.
In particular, Lo and Markstr\"om proved that, for $r>\ell>1$, the set of possible $\ell$-degree Tur\'an densities is dense in $[0,1)$.
The local gaps in such sets are described by the language of jumps.
For $X\subseteq[0,1]$, a point $\alpha\in[0,1)$ is a \emph{jump of $X$} if $X\cap(\alpha,\alpha+c)=\varnothing$ for some $c>0$, and is a \emph{non-jump of $X$} otherwise.
For the ordinary Tur\'an density of $r$-graphs, Erd\H{o}s~\cite{Erdos1964} proved that every $\alpha\in[0,r!/r^r)$ is a jump and conjectured that every $\alpha\in[0,1)$ is a jump.
Frankl and R\"odl~\cite{FranklRodl1984} disproved this conjecture by constructing non-jumps for every $r\ge3$.
Among later developments, Baber and Talbot~\cite{BaberTalbot2011} found the first explicit jumps above the Erd\H{o}s threshold, while Frankl, Peng, R\"odl and Talbot~\cite{FPRT2007} proved that $5r!/(2r^r)$ is a non-jump for every $r\ge3$.
Peng further developed Lagrangian constructions of non-jumping numbers and studied jumping densities, particularly in higher uniformities \cite{Peng07NonJumping4Uniform,Peng2007II,Peng2008I,Peng09JumpingDensities}.
More recently, Liu and Mubayi~\cite{LiuMubayi2026FourNinths} proved that $4/9$ is a non-jump for $3$-graphs.

For uniform Tur\'an density, palettes (defined in Section~\ref{se:palettes} here) have emerged as the principal framework for studying attainable values.
Lamaison~\cite{Lamaison2024Palettes} proved that palettes determine the uniform Tur\'an density of every single $3$-graph.
King, Sales and Sch\"ulke~\cite{KingSalesSchulke2024} obtained realization results for scaled $3$-graph Lagrangians, and King, Piga, Sales and Sch\"ulke~\cite{KingPigaSalesSchulke2025} proved that every finite-palette Lagrangian is the uniform Tur\'an density of a finite forbidden family.
Kr\'al', Ku\v{c}er\'ak, Lamaison and Tardos~\cite{KralKucerakLamaisonTardos2025} subsequently gave a palette-separation theorem in terms of palette homomorphisms.
Lin, Sun, Wang and Zhou~\cite{LinSunWangZhou2026} extended the palette characterization to arbitrary families and to the $(r-2)$-uniform Tur\'an density of $r$-graphs for every $r\ge3$.
These realization results also connect the jump phenomena in the ordinary and uniformly dense settings: every non-jump for ordinary $3$-graph Tur\'an densities yields a non-jump of $\PiU$, and the finite-palette theorem gives the analogous conclusion within the uniform Tur\'an densities of finite forbidden families~\cite{KingSalesSchulke2024,KingPigaSalesSchulke2025}.

A stronger global question is whether a set of attainable densities actually contains an interval.
In the ordinary setting, Pikhurko~\cite{Pikhurko2014} and Grosu~\cite{Grosu2016} initiated a topological study of Tur\'an density sets; Grosu asked, in particular, about their Hausdorff dimension.
Frankl, Peng, R\"odl and Talbot~\cite{FPRT2007} asked whether, for every $r\ge3$, the Tur\'an densities of possibly infinite families of $r$-graphs contain a terminal interval.
Liu and Pikhurko~\cite{LiuPikhurkoIntervals} recently answered this question affirmatively.
They also proved that the uniform Tur\'an densities of finite families of $3$-graphs are dense in a non-degenerate interval.
That density result does not by itself imply that $\PiU$ contains an interval, since it is not known whether $\PiU$ is closed.
The aim of the present paper is to obtain pointwise realization of an interval in the uniformly dense setting by allowing the forbidden family to be infinite.
Note that the use of possibly infinite forbidden families is necessary for an interval statement, since there are only countably many finite families of $3$-graphs.

Our main result is the following.

\begin{theorem}\label{thm:main}
    There exists a constant $\delta>0$ such that $[1-\delta,1]\subseteq \PiU$.
    In particular, $\PiU$ has positive Lebesgue measure and Hausdorff dimension $1$.
\end{theorem}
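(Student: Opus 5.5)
We will use the palette framework announced for Section~\ref{se:palettes}. The relevant input is the characterization of Lin, Sun, Wang and Zhou (extending Lamaison) for arbitrary families. For a family $\cF$, $\pivvv(\cF)$ equals the supremum of the densities $d(P)$ of palettes $P$ that are $\cF$-admissible, meaning every $3$-graph realized by $P$ is $\cF$-free. Conversely, given any set $\mathcal P$ of palettes, let $\cF_{\mathcal P}$ be the family of all $3$-graphs not realizable by any $P\in\mathcal P$. Then $\pivvv(\cF_{\mathcal P})=\sup\{d(Q)\colon Q \text{ is a palette all of whose realizable graphs are realizable by some } P\in\mathcal P\}$. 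Here the density of a palette is the supremum, over colour weightings, of the probability that a random triple receives an admissible colour pattern.

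The task therefore reduces to constructing, for each target $\alpha\in[1-\delta,1]$, a palette (or a downward-closed set of palettes) whose ``realizability closure'' has supremum density exactly $\alpha$. I would follow the strategy of Liu and Pikhurko for ordinary Tur\'an densities, adapted to palettes.

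The first step is to fix a base palette $P_0$ with density $1$ restricted by a small defect. Concretely, I would take colours $1,\dots,k$ with all triples admissible except a sparse, carefully structured set of forbidden triples. The weighted density of this palette is then a polynomial in the colour weights, and its maximum is close to $1$. The second step is to introduce a continuous family $P_t$, $t\in[0,1]$. I would encode a binary expansion of $t$ into an infinite sequence of ``switch'' gadgets: colour classes $c_1,c_2,\dots$ whose forbidden-pattern structure at level $i$ depends on the $i$-th digit. Each level should contribute to the optimal density with geometrically decaying weight, so that the map $t\mapsto \lambda(t)=\sup d(P_t)$ is continuous and strictly monotone from $[0,1]$ onto $[1-\delta,1]$. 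Because the palette is infinite (or a union of finite truncations), the forbidden family $\cF$ is infinite, which is exactly what the theorem allows.

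The third step, and the main obstacle, is the matching upper bound. We must show that no palette $Q$ whose realizable $3$-graphs all embed in $P_t$-constructions can exceed $\lambda(t)$. The difficulty is that $Q$ may merge or split colours in ways that beat the intended optimum; in particular, palette homomorphisms à la Kr\'al', Ku\v{c}er\'ak, Lamaison and Tardos may map $Q$ nontrivially. To handle this, I would design the gadgets to be ``rigid.'' Any $Q$ realizing only $P_t$-realizable graphs should admit a homomorphism into $P_t$, via the palette-separation theorem, applied at each finite truncation and passed to the limit by compactness. Then $d(Q)\le d(P_t)$ follows from monotonicity of density under homomorphisms. Rigidity near density $1$ should come from a stability argument: a near-optimal weighting must place almost all weight on the intended colour classes, and there the local optimization is a strictly concave perturbation.

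Finally, for a fixed $\alpha$, I would pick $t$ with $\lambda(t)=\alpha$ using the intermediate value theorem, and set $\cF=\cF_{\{P_t\}}$. This gives $[1-\delta,1]\subseteq\PiU$. Positive Lebesgue measure follows immediately, and Hausdorff dimension $1$ follows since $\PiU\subseteq[0,1]$ contains an interval.
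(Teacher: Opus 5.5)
Your proposal is a template rather than a proof, and the step it leaves blank is the heart of the matter: you never exhibit palettes whose Lagrangians can actually be computed. The claim that each level ``contributes with geometrically decaying weight'', so that $t\mapsto\lambda(t)$ is continuous and strictly monotone, assumes that deleting the level-$i$ patterns lowers the Lagrangian by exactly their cost at a fixed intended weighting. In general this fails. Deleting patterns shifts the maximizing weighting, the levels interact, and the limiting values need not form an interval.

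What is needed is a stability inequality valid for \emph{every} weighting, at every finite stage and for every digit choice, with constants that do not degrade as the number of levels grows. The paper builds this from:
\begin{itemize}
\item a root palette on a $d$-regular expander;
\item spectral $2$-lifts with Bilu--Linial signings that delete the repeated-colour patterns along the complementary lift, so the loss is the cubic functional $3\mathcal{E}_{D^-}$, whose spectral gap penalizes any imbalance between lifted copies (Lemma~\ref{lem:cubic-expander});
\item a quadratic lower bound on the cost of a tripartite perfect matching (Lemma~\ref{lem:matching-bound});
\item a multiscale summation that absorbs errors on scales $4^{-i}$ (Theorem~\ref{thm:tower-stability}).
\end{itemize}
Only then is the uniform weighting the unique maximizer everywhere and $\lambda(P_n(\mathbf k))$ given exactly by Proposition~\ref{prop:branch-values}.

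Conversely, the ``rigidity'' you aim at the upper bound is unnecessary. Suppose the finite truncations satisfy $P^{(n+1)}\to P^{(n)}$ and $\lambda(P^{(n)})\downarrow x$. Then any finite $Q$ with $\lambda(Q)>x$ has no homomorphism to $P^{(n)}$ or $\rev(P^{(n)})$ for large $n$, and palette separation directly supplies the obstruction. (You dropped the $\rev$ alternative; this is harmless since $\lambda(\rev(P))=\lambda(P)$.)

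Three further steps fail as written.

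First, the characterization and separation theorems concern finite palettes, and with $\cF=\cF_{\{P_t\}}$ the lower bound is not free. Because the truncations' Lagrangians strictly decrease, separation shows no truncation is $\cF$-admissible. So you must still produce admissible finite palettes, or $\cF$-free dense graphs, of density near $\alpha$. The paper sidesteps this in Proposition~\ref{prop:projective-chain} by putting into $\cF$ only blow-ups $\widehat F_j$ with more than $g_{k_j-1}$ vertices. The random $P_m$-colourable graphs $G_m$ from Lemma~\ref{lem:weighted-palette} are then $\cF$-free.

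Second, the digit set must match the scaling. In a doubling construction a level-$i$ deletion costs of order $N_i^{-2}\propto 4^{-i}$, so binary digits give a Cantor set, not an interval. The paper therefore needs four pairwise disjoint deletable matchings at every level, kept available independently of earlier choices by continuation reservoirs, together with the redundant digit set $\{0,\dots,4\}$ (Lemma~\ref{lem:interval-coding}).

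Third, such a family yields only an interval $[a,b]$ bounded away from $1$ (Theorem~\ref{thm:base-interval}), and nothing in your plan covers $(b,1)$. The paper applies the complete join $J_M$, which preserves homomorphisms and sends $x$ to $1-(1-x)/M^2$ (Lemma~\ref{lem:join-formula}). The images of $[a,b]$ then overlap for all large $M$ and fill a terminal interval.
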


The proof has three steps.
First, rather than trying to prove that $\PiU$ is closed, we directly realize certain decreasing limits of finite-palette Lagrangians.
More precisely, suppose that we have a sequence of finite palettes, with a palette homomorphism from each palette to the preceding one, and that their Lagrangians decrease to a limit $x$.
We show that $x$ is the uniform Tur\'an density of a suitable, possibly infinite, forbidden family (Proposition~\ref{prop:projective-chain}).
The homomorphisms make the lower-bound constructions from different levels compatible, while palette separation supplies an obstruction to every palette that could force a density above $x$.

Second, we construct enough such chains to fill an interval.
Starting with a palette on a large regular graph, we repeatedly take spectrally controlled $2$-lifts using the Bilu--Linial signing theorem~\cite{BiluLinial2006} (Proposition~\ref{prop:lift-tower}).
The chosen lift supports the next palette, while the complementary lift records the repeated-color patterns (that is, ordered triples in which one color appears exactly twice) that are removed (Lemma~\ref{lem:exact-lift}).
A global cubic expander estimate shows that any imbalance between the two lifted copies of a color costs more than it can gain (Lemma~\ref{lem:cubic-expander}).
Summing this estimate over all levels shows that the uniform weighting remains the unique maximizer, even after deleting up to four prescribed tripartite perfect matchings at each step (Theorem~\ref{thm:tower-stability}).
These matchings are chosen persistently: every positive level supplies four fresh deletions without disturbing the choices available later.
Deleting $k_i\in\{0,1,2,3,4\}$ matchings at level $i$ acts like choosing a base-$4$ digit, since the number of colors doubles and the cost is proportional to $4^{-i}$ (Proposition~\ref{prop:branch-values}).
The redundant digit set $\{0,1,2,3,4\}$ fills a whole interval (Lemma~\ref{lem:interval-coding}).
Hence, by the first step, the branch limits form a non-degenerate interval $[a,b]\subseteq\PiU$ (Theorem~\ref{thm:base-interval}).

Third, the $M$-fold complete join, a standard technique, preserves palette homomorphisms and sends a Lagrangian $x$ to $1-(1-x)/M^2$ (Lemma~\ref{lem:join-formula}).
For all sufficiently large consecutive values of $M$, the images of $[a,b]$ overlap and approach $1$, producing the required terminal interval (Theorem~\ref{thm:main}).

% The proof uses the following three previously known results: the Bilu--Linial signing theorem~\cite{BiluLinial2006}, the palette separation theorem of Kr\'al', Ku\v{c}er\'ak, Lamaison and Tardos~\cite{KralKucerakLamaisonTardos2025}, and the arbitrary-family palette characterization of Lin, Sun, Wang and Zhou~\cite{LinSunWangZhou2026} (which extends that of the single $3$-graph version of Lamaison~\cite{Lamaison2024Palettes}).
% We state below the precise forms that we use; all other ingredients are proved in the paper.

The paper is organized as follows.
Section~\ref{sec:preliminaries} develops the palette formalism and constructs the spectrally controlled graph tower.
Section~\ref{sec:projective-limits} proves the projective-chain realization principle.
Section~\ref{sec:palette-lift} places palettes on the graph tower and establishes their multiscale stability.
Section~\ref{sec:interval-construction} completes the interval construction. 
Section~\ref{sec:concluding-remarks} records the extension to all uniformities.

%%%%%%%%%%%%%%%%%%%%%%%%%%%%%%%%%%%%%%%%%%%%%%%%%%%%
\section{Preliminaries}
\label{sec:preliminaries}
%%%%%%%%%%%%%%%%%%%%%%%%%%%%%%%%%%%%%%%%%%%%%%%%%%%%

For an integer $t\ge0$, write $[t]\coloneqq \{1,\ldots,t\}$, with $[0]=\varnothing$.
All graphs, hypergraphs and palettes considered below are finite unless explicitly stated otherwise.
We write $a_m\downarrow a$ if the sequence $(a_m)$ is strictly decreasing and converges to $a$.
Unless a different subscript is displayed, all vector norms are Euclidean.
For a real matrix $A$, its \emph{Euclidean operator norm} is
\[
\|A\|_{\op}\coloneqq \max_{\|\mathbf{x}\|_2=1}\|A\mathbf{x}\|_2.
\]
If $A$ is symmetric, then $\|A\|_{\op}$ is the largest absolute value of an eigenvalue of $A$.
Every supremum below is taken in the ordered interval $[0,1]$; in particular, the supremum of the empty set is $0$. 

Let $J_V$ denote the square matrix indexed by a set $V$ with all entries $1$; its eigenvalues are $|V|$ (simple) and $0$ (of multiplicity $|V|-1$). We may write $J$ when the index set $V$ is understood.

\subsection{Uniform Tur\'an density and palettes}\label{se:palettes}

If $H$ is a $3$-graph and $U\subseteq V(H)$, write $e_H(U)$ for the number of edges of $H$ contained in $U$.
A $3$-graph is \emph{$\cF$-free} if it contains no member of $\cF$ as a subgraph.
We record the quantifiers in the definition of uniform Tur\'an density for later reference.

\begin{definition}\label{def:uniform-density}
    For $d\in[0,1]$ and $\eta>0$, a $3$-graph $H$ on $n$ vertices is \emph{$(d,\eta,\vvv)$-dense}, or \emph{uniformly $(d,\eta)$-dense}, if $e_H(U)\ge d\binom{|U|}{3}-\eta n^3$ for every $U\subseteq V(H)$.
    
    For a family $\cF$ of $3$-graphs, let $A(\cF)$ be the set of all $d\in[0,1]$ with the following property: for every $\eta>0$ and $n_0\in\N$, there exists an $\cF$-free, uniformly $(d,\eta)$-dense $3$-graph on at least $n_0$ vertices.
    The \emph{uniform Tur\'an density} of $\cF$ is $\pivvv(\cF)\coloneqq \sup A(\cF)$, where we use the convention $\sup\varnothing=0$ in the ordered interval $[0,1]$.
    % We allow $\cF=\varnothing$, so $\pivvv(\varnothing)=1$.
    % Thus
    % \[
    %   \PiU=\{\pivvv(\cF)\colon\cF\text{ is a possibly infinite family of
        %   $3$-graphs}\}.
    % \]
\end{definition}

A \emph{palette} is a pair $P=(C,T)$, where $C$ is a finite nonempty set of colors and $T\subseteq C^3$ is a set of ordered triples of colors which we call \emph{admissible} or \emph{allowed}.
We call an ordered triple in $C^3$ a \emph{pattern}; its three coordinates record, in a prescribed order, the colors assigned to the three pairs of a vertex triple, and $T$ is the set of patterns allowed by $P$.
A \emph{weighting} of $C$ is a vector $\mathbf{x}=(x_c)_{c\in C}$ in the simplex
\[
\Delta_C\coloneqq \Big\{\mathbf{x}\in[0,1]^C\colon\sum_{c\in C}x_c=1\Big\}.
\]
The \emph{uniform weighting} on $C$ is $\mathbf{u}_C\coloneqq |C|^{-1}\1_C$, where $\1_C$ is the all-1 vector indexed by $C$.
When the underlying color set is clear and has size $N$, we may write $\mathbf{u}_N$ for $\mathbf{u}_C$ and $\1$ for $\1_C$.
The \emph{palette polynomial} and \emph{palette Lagrangian} are
\[
\Lambda_P(\mathbf{x})\coloneqq \sum_{(a,b,c)\in T}x_ax_bx_c,
\quad\text{and}\quad 
\lambda(P)\coloneqq \sup_{\mathbf{x}\in\Delta_C}\Lambda_P(\mathbf{x}),
\]
where the supremum is attained by compactness. The \emph{unweighted density} is $d(P)\coloneqq |T|/|C|^3=\Lambda_P(\mathbf{u}_C)$.

Let $F$ be a $3$-graph.
We say that $F$ is \emph{$P$-colorable} if there are a linear order $<$ on $V(F)$ and a coloring $\phi\colon\binom{V(F)}2\to C$ such that $\bigl(\phi(uv),\phi(uw),\phi(vw)\bigr)\in T$ for every edge $u<v<w$ of $F$.
A family $\cF$ is \emph{$P$-colorable} if at least one of its members is $P$-colorable; equivalently, $\cF$ is not $P$-colorable precisely when no member of $\cF$ is $P$-colorable.

A \emph{homomorphism} $f\colon P\to Q$ is a map on color sets such that admissible triples of $P$ map to admissible triples of $Q$.
If $f\colon C\to C'$ is a map between finite color sets and $\mathbf{x}\in\Delta_C$, the \emph{push-forward} of $\mathbf{x}$ under $f$ is the weighting $f_*\mathbf{x}\in\Delta_{C'}$ defined by $(f_*\mathbf{x})_{c'}\coloneqq \sum_{c\in f^{-1}(c')}x_c$ for $c'\in C$. 
Thus the weight of $c'$ is the total weight of the colors mapped to $c'$.
The \emph{reverse palette} $\rev(P)$ has the same color set, with $(a,b,c)\in\rev(P)$ if and only if $(c,b,a)\in P$.
An \emph{isomorphism of palettes} is a bijective homomorphism whose inverse is also a homomorphism; equivalently, it is a relabeling of the color set that preserves admissible and inadmissible triples.
Reversing a witnessing vertex order shows that a $3$-graph is $P$-colorable if and only if it is $\rev(P)$-colorable.
The definition also gives $d(\rev(P))=d(P)$ and $\lambda(\rev(P))=\lambda(P)$.

The following elementary consequences of the definitions allow us to pass colorability and Lagrangian bounds along palette homomorphisms.

\begin{fact}\label{fact:palette-monotonicity}
    Let $P$ and $Q$ be finite palettes.
    \begin{enumerate}
        \item If a $3$-graph $F$ is $P$-colorable, then every subgraph of $F$ is $P$-colorable.
        \item If $P\to Q$, then every $P$-colorable $3$-graph is $Q$-colorable.
        \item If $P\to Q$, then $\lambda(P)\le\lambda(Q)$.
    \end{enumerate}
\end{fact}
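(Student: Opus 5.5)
All three parts follow directly from the definitions, so I will verify them one at a time.

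For part 1, fix a witness for $F$: a linear order $<$ on $V(F)$ and a coloring $\phi\colon\binom{V(F)}2\to C$ such that every edge $u<v<w$ receives an admissible pattern. Let $F'\subseteq F$. I restrict $<$ to $V(F')$ and $\phi$ to $\binom{V(F')}2$. Every edge of $F'$ is an edge of $F$, and the relative order of its three vertices is unchanged, so its pattern is still in $T$. Hence $F'$ is $P$-colorable.

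For part 2, let $f\colon P\to Q$ be a homomorphism, and let $(<,\phi)$ witness that $F$ is $P$-colorable. I keep the same order $<$ and use the composite coloring $f\circ\phi$. For an edge $u<v<w$, its pattern under $f\circ\phi$ is
\[
\bigl(f(\phi(uv)),f(\phi(uw)),f(\phi(vw))\bigr).
\]
This is the coordinatewise image under $f$ of an admissible triple of $P$, so it is admissible in $Q$ by the definition of a homomorphism. Hence $F$ is $Q$-colorable.

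For part 3, which is the only step needing a short computation, write $P=(C,T)$ and $Q=(C',T')$. Take $\mathbf{x}\in\Delta_C$ attaining $\lambda(P)$, and set $\mathbf{y}\coloneqq f_*\mathbf{x}\in\Delta_{C'}$. Expanding each coordinate of $\mathbf{y}$ as a sum over fibres gives
\[
\Lambda_Q(\mathbf{y})=\sum_{(a',b',c')\in T'}\ \sum_{a\in f^{-1}(a'),\,b\in f^{-1}(b'),\,c\in f^{-1}(c')}x_ax_bx_c .
\]
This double sum is a sum of nonnegative monomials $x_ax_bx_c$, one for each triple $(a,b,c)\in C^3$ with $(f(a),f(b),f(c))\in T'$. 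Each such triple occurs exactly once, because the fibres of $f$ are disjoint. Since $f$ maps every triple of $T$ into $T'$, the index set contains $T$. As all terms are nonnegative,
\[
\Lambda_Q(\mathbf{y})\ge\sum_{(a,b,c)\in T}x_ax_bx_c=\lambda(P),
\]
and therefore $\lambda(Q)\ge\Lambda_Q(\mathbf{y})\ge\lambda(P)$.

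There is no real obstacle. The only point needing care is in part 3: the expansion of the push-forward counts each triple of $C^3$ exactly once, and nonnegativity of the weights lets us discard the extra terms coming from non-admissible triples of $P$ whose images happen to be admissible in $Q$.
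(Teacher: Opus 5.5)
Your argument is correct: you restrict the witnessing order and coloring for part 1, compose to get $f\circ\phi$ for part 2, and for part 3 use the push-forward weighting $f_*\mathbf{x}$ together with nonnegativity of the monomials. The paper states this Fact without proof as an elementary consequence of the definitions, and it introduces $f_*\mathbf{x}$ right after defining homomorphisms, so your write-up is essentially the intended argument.
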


We first record an elementary probabilistic construction and then state two known structural results about palettes.
The probabilistic statement is the standard weighted random-palette construction in the convention used here.
This is the usual independent pair-coloring argument; compare the discussion preceding 
%King, Sales and Sch\"ulke~
\cite[Lemma~3.4]{KingSalesSchulke2024}, whose convention interchanges our second and third palette coordinates.
The resulting existence statement is standard.

\begin{lemma}\label{lem:weighted-palette}
    For every finite palette $P$, every $\varepsilon,\eta>0$, and every $n_0\in\N$, there exists a $P$-colorable $3$-graph $H$ on at least $n_0$ vertices that is uniformly $\bigl(\max\{\lambda(P)-\varepsilon,0\},\eta\bigr)$-dense.
\end{lemma}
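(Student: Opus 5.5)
Fix a weighting $\mathbf{x}\in\Delta_C$ with $\Lambda_P(\mathbf{x})=\lambda(P)$; it exists by compactness. If $\lambda(P)\le\varepsilon$, the required density is $0$, and any large $P$-colorable $3$-graph works, for instance the empty $3$-graph. So assume $\lambda(P)>\varepsilon$. Take $n\ge n_0$ large, vertex set $[n]$ with its natural order, and colour each pair $uv$ independently with colour $c$ with probability $x_c$. Let $H$ consist of all triples $u<v<w$ with $\bigl(\phi(uv),\phi(uw),\phi(vw)\bigr)\in T$. By construction, $\phi$ and the natural order witness that $H$ is $P$-colorable, whatever the outcome of the random colouring. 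It remains to show that with positive probability $H$ is uniformly $(\lambda(P)-\varepsilon,\eta)$-dense.

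For a fixed triple $u<v<w$, its three pairs are distinct, so their colours are independent. Hence the triple is an edge with probability exactly $\sum_{(a,b,c)\in T}x_ax_bx_c=\lambda(P)$. For fixed $U\subseteq[n]$ this gives $\mathbb{E}\,e_H(U)=\lambda(P)\binom{|U|}{3}$. Now view $e_H(U)$ as a function of the $\binom n2$ independent pair colours. Changing the colour of one pair $uv$ affects only triples containing $uv$, so $e_H(U)$ changes by at most $n$. McDiarmid's bounded differences inequality then gives
\[
\Pr\Bigl[e_H(U)<\lambda(P)\tbinom{|U|}{3}-\eta n^3\Bigr]\le\exp\Bigl(-\frac{2\eta^2n^6}{\binom n2 n^2}\Bigr)\le\exp(-4\eta^2n^2).
\]

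Taking a union bound over all $2^n$ subsets $U$, the failure probability is at most $2^n e^{-4\eta^2n^2}<1$ for $n$ large. So some outcome is uniformly $(\lambda(P),\eta)$-dense, and hence also $(\lambda(P)-\varepsilon,\eta)$-dense. The only point that needs care is this concentration step. The union bound over $2^n$ sets needs a deviation bound of order $e^{-\Omega(n^2)}$, and the estimate $e^{-\Omega(\eta^2 n^2)}$ above beats it. I do not expect a real obstacle here; the $\varepsilon$ slack is not even needed beyond handling the degenerate case. The authors' ordering convention matters only in that the triple pattern is read in the order $(uv,uw,vw)$, which matches the definition of $P$-colorability.
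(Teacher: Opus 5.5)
Your proposal is correct and follows essentially the same route as the paper: a random independent pair colouring drawn from a maximizing weighting, McDiarmid's inequality with bounded differences at most $n$ giving $\exp(-4\eta^2n^2)$, and a union bound over the $2^n$ subsets. The only difference is cosmetic: your separate treatment of the case $\lambda(P)\le\varepsilon$ is unnecessary, since the random construction already gives $(\lambda(P),\eta)$-density and $\max\{\lambda(P)-\varepsilon,0\}\le\lambda(P)$.
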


\begin{proof}
    Write $P=(C,T)$ and choose a weighting $\mathbf{x}\in\Delta_C$ with $\Lambda_P(\mathbf{x})=\lambda(P)$.
    Let $n\ge n_0$ be sufficiently large.
    Independently for every pair of vertices of $[n]$, assign color $c$ with probability $x_c$.
    Let $H$ consist of those triples $u<v<w$ for which $\bigl(\phi(uv),\phi(uw),\phi(vw)\bigr)\in T$.
    The natural order and the sampled pair coloring witness that $H$ is $P$-colorable.
    
    Put $d\coloneqq \max\{\lambda(P)-\varepsilon,0\}$.
    For a fixed $U\subseteq[n]$, let $X_U=e_H(U)$.
    Every triple of $U$ is present with probability $\Lambda_P(\mathbf{x})=\lambda(P)$, and hence $\mathbb E X_U=\lambda(P)\binom{|U|}{3}$. 
    Changing the color assigned to a single pair affects only triples containing that pair, so it changes $X_U$ by at most $n-2<n$.
    If $d\binom{|U|}{3}-\eta n^3<0$, the required lower bound is automatic.
    Otherwise the bad event implies $X_U-\mathbb E X_U< -\eta n^3$ because $d\le\lambda(P)$.
    McDiarmid's bounded-differences inequality~\cite{Mcdiarmid89} and the bound $\sum_{e\in\binom{[n]}2} n^2 =\binom n2n^2<\frac{n^4}{2}$ therefore give
    \[
    \Pr\left(X_U<d\binom{|U|}{3}-\eta n^3\right)
    \le \exp\!\left(-\frac{2\eta^2n^6}{\frac{n^4}{2}}\right)
    =\exp\!\left(-4\eta^2n^2\right).
    \]
    The union bound over all $2^n$ choices of $U$ shows that the probability of any failure is at most $2^n\exp(-4\eta^2n^2)$, which is less than $1$ for all sufficiently large $n$.
    Thus some sampled coloring produces the required $H$.
\end{proof}

We now state two palette-classification results.
The first is the single-palette separation theorem of Kr\'al', Ku\v{c}er\'ak, Lamaison and Tardos~\cite[Theorem~13]{KralKucerakLamaisonTardos2025}.
Their inverse palette $\operatorname{inv}(Q)$ is exactly $\rev(Q)$, and their ordering of the three pair colors agrees with ours.

\begin{theorem}[{\cite[Theorem~13]{KralKucerakLamaisonTardos2025}}]
    \label{thm:palette-separation}
    For finite palettes $P$ and $Q$, there exists a finite $3$-graph that is $P$-colorable but not $Q$-colorable if and only if there is no homomorphism $P\to Q$ and no homomorphism $P\to\rev(Q)$.
\end{theorem}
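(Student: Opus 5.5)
The implication from a homomorphism to colorability is immediate. If $f\colon P\to Q$ is a homomorphism, then Fact~\ref{fact:palette-monotonicity}(2) shows that every $P$-colorable $3$-graph is $Q$-colorable. If instead $f\colon P\to\rev(Q)$, then every $P$-colorable $3$-graph is $\rev(Q)$-colorable. Reversing the witnessing order turns a $\rev(Q)$-coloring into a $Q$-coloring, so it is again $Q$-colorable. So if either homomorphism exists, no $P$-colorable but non-$Q$-colorable $F$ exists. The work is in the converse: assuming there is no homomorphism $P\to Q$ and none $P\to\rev(Q)$, I must build a finite $P$-colorable $F$ that is not $Q$-colorable.

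My plan is to take $F$ to be the \emph{trace} of a sufficiently rich $C$-edge-colored ordered complete graph. Write $P=(C,T)$. For an ordered complete graph $K$ with pair coloring $\phi\colon\binom{V(K)}2\to C$, let $F_K$ consist of the triples $u<v<w$ with $(\phi(uv),\phi(uw),\phi(vw))\in T$. Then $F_K$ is $P$-colorable by construction. First I fix a finite \emph{test structure} $A$. This is an ordered $C$-colored complete graph in which every pattern of $T$ occurs on some increasing triple, together with a few auxiliary vertices whose purpose is explained below. Then I choose a target $K$ so that the Ne\v{s}et\v{r}il--R\"odl theorem applies. That theorem says that finite ordered edge-colored complete graphs form a Ramsey class. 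I apply it to $2$-vertex substructures, with a bounded number of colors $|C_Q|\cdot 2$, where $Q=(C_Q,T_Q)$. The resulting property is: for every coloring of the pairs of $K$ by elements of $C_Q\times\{\pm\}$, some copy of $A$ has the following form. For each $c\in C$, all pairs of $\phi$-color $c$ receive the same value.

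Now suppose $F_K$ had a $Q$-coloring, witnessed by an order $\prec$ and a map $\psi\colon\binom{V(K)}2\to C_Q$. I label each pair $uv$ with $u<v$ by $\psi(uv)$ together with a sign recording whether $u\prec v$. The Ramsey property gives a copy of $A$ on which both $\psi$ and the sign depend only on the $\phi$-color. This defines a map $g\colon C\to C_Q$ and a sign function $s\colon C\to\{\pm\}$.

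The key step is to show that $s$ is constant on this copy, so that $\prec$ restricted to it either agrees with $<$ or is its reverse. This is what the auxiliary vertices of $A$ are for. For each pair of colors $c\ne c'$, I include in $A$ a few vertices whose pairs realize both colors in a configuration where mixed orientations would produce a directed cycle in $\prec$. For instance, take $u<v<w$ with $uv$ and $vw$ of color $c$ and $uw$ of color $c'$. If $s(c)=+$ and $s(c')=-$, this forces $u\prec v\prec w\prec u$, which is impossible. The other mixed case is handled symmetrically, and one adds similar gadgets to cover every ordered pair of colors. Once the orientation is uniform, every pattern $(a,b,c)\in T$ occurs on an increasing triple of the copy, and that triple is an edge of $F_K$. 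If $\prec$ agrees with $<$, then $(g(a),g(b),g(c))\in T_Q$, so $g\colon P\to Q$ is a homomorphism. If $\prec$ is reversed, then the roles of the first and last vertex swap, giving $(g(c),g(b),g(a))\in T_Q$, so $g\colon P\to\rev(Q)$ is a homomorphism. Either way this contradicts the hypothesis, so $F_K$ is not $Q$-colorable.

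I expect the main obstacle to be the Ramsey step together with the orientation-consistency gadgetry. The Ramsey step needs the ordered structural Ramsey theorem, not the plain Ramsey theorem, because monochromaticity is required within each $\phi$-color class of a prescribed copy of $A$. The gadgets must rule out inconsistent orientations for all pairs of colors simultaneously, which takes some care in designing $A$.
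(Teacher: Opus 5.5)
Your proposal is correct. There is nothing in the paper to compare it with, because the paper imports this statement as a black box from \cite[Theorem~13]{KralKucerakLamaisonTardos2025} and gives no proof. Your argument is a valid self-contained proof.

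The easy direction is exactly the paper's remark that a $3$-graph is $Q$-colorable if and only if it is $\rev(Q)$-colorable. The hard direction is the natural canonization argument. $F_K$ is $P$-colorable by construction. Any $Q$-coloring $(\prec,\psi)$ of $F_K$ becomes, on some copy of $A$, a function of the $P$-color alone. The pattern triangles of $A$ then turn $g$ into a homomorphism $P\to Q$ or $P\to\rev(Q)$. Your bookkeeping in the reversed case, $(g(c),g(b),g(a))\in T_Q$, is right.

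Compared with the paper, your route removes the dependence on the cited work, at the cost of invoking the Ne\v{s}et\v{r}il--R\"odl theorem. It also produces a separating $3$-graph of enormous size. That is harmless here, since Proposition~\ref{prop:projective-chain} only needs existence.

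When you write it up, make three points explicit.
\begin{itemize}
\item \emph{Iterating the Ramsey step.} The structural Ramsey theorem handles one substructure type at a time, and here there are $|C|$ types $B_c$ (an ordered pair whose edge has color $c$). So set $A_0=A$, choose $A_j$ with $A_j\to(A_{j-1})^{B_{c_j}}_{2|C_Q|}$, and take $K=A_{|C|}$. Constancy on a color class survives passing to a sub-copy.
\item \emph{Why the class is Ramsey.} Ordered $C$-edge-colored complete graphs form a Ramsey class by Ne\v{s}et\v{r}il--R\"odl. Encode all but one color as pairwise disjoint symmetric irreflexive relations; this gives an ordered free-amalgamation class.
\item \emph{Building $A$.} $A$ must be a single complete ordered $C$-colored graph. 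Place the pattern triangles and gadgets consecutively and color the cross pairs arbitrarily.
\end{itemize}

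Your gadget is also stronger than you state. Take $u<v<w$ with $\phi(uv)=\phi(vw)=c$ and $\phi(uw)=c'$. The sign vector on $(uv,uw,vw)$ is $(s(c),s(c'),s(c))$. The only sign vectors on three vertices not induced by a linear order are $(+,-,+)$ and $(-,+,-)$. So one such triangle per unordered pair $\{c,c'\}$ rules out both mixed cases at once. This step uses only that $\prec$ is a linear order on $V(K)$, not that the gadget triples are edges of $F_K$.
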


The second result originates with Lamaison~\cite{Lamaison2024Palettes}, who proved the palette characterization for a single forbidden $3$-graph.
We use the extension to an arbitrary, possibly infinite, family of $3$-graphs given by Lin, Sun, Wang and Zhou~\cite[Theorem~1.4]{LinSunWangZhou2026}. Their proof is inspired by Lamaison's strategy but requires an additional compactness argument.
For $u<v<w$, their coordinate order is $\bigl(\phi(vw),\phi(uw),\phi(uv)\bigr)$, the reverse of ours.
Applying their theorem to the reverse palette, and using the invariance observations above, gives the following equivalent formulation.

\begin{theorem}[\cite{Lamaison2024Palettes}, {\cite[Theorem~1.4]{LinSunWangZhou2026}}]
    \label{thm:palette-characterization}
    For every family $\cF$ of $3$-graphs,
    \[
    \pivvv(\cF)
    =\sup\{d(Q)\colon\text{no member of $\cF$ is $Q$-colorable}\}.
    \]
\end{theorem}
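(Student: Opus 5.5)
The identity splits into two inequalities. For ``$\ge$'' I use the random-palette construction, and for ``$\le$'' I use hypergraph regularity together with a finiteness argument to handle infinite $\cF$.

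\textbf{Lower bound.} Let $Q$ be a palette such that no member of $\cF$ is $Q$-colorable. By Lemma~\ref{lem:weighted-palette}, for every $\varepsilon,\eta>0$ and $n_0$ there is a $Q$-colorable, uniformly $(\max\{\lambda(Q)-\varepsilon,0\},\eta)$-dense $3$-graph $H$ on at least $n_0$ vertices. This $H$ is $\cF$-free. Indeed, a copy of some $F\in\cF$ in $H$ would be a subgraph of a $Q$-colorable graph. It would therefore be $Q$-colorable by Fact~\ref{fact:palette-monotonicity}, since colorability passes to subgraphs and is invariant under relabelling vertices. So $\lambda(Q)-\varepsilon\in A(\cF)$ for every $\varepsilon>0$. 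Since $\lambda(Q)\ge\Lambda_Q(\mathbf u)=d(Q)$, we get $\pivvv(\cF)\ge d(Q)$. Taking the supremum over $Q$ gives ``$\ge$''.

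\textbf{Upper bound, fixed colour bound.} Suppose $d<\pivvv(\cF)$. I want a palette $Q$ that colours no member of $\cF$ and has $d(Q)\ge d-\varepsilon$, for an arbitrary $\varepsilon>0$. Take an $\cF$-free, uniformly $(d,\eta)$-dense $H$ on $n$ vertices and apply the R\"odl--Schacht regularity lemma. This gives vertex clusters $V_1,\dots,V_t$ and, for each pair $i<j$, a partition of $K(V_i,V_j)$ into boundedly many regular bipartite graphs; these are the ``colours''. The number of colours per pair is bounded in terms of $\varepsilon$ only. Each triad of bipartite graphs is classified as dense or sparse in $H$. Colour the triples $i<j<k$ of clusters by this finite reduced pattern and apply Ramsey's theorem to find many clusters $i_1<\dots<i_m$ on which the pattern is the same. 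After identifying colour labels across pairs, this gives one palette $Q=(C,T)$ with $|C|\le N(\varepsilon)$, where $T$ records the dense triads.

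Two properties of $Q$ are then needed.
\begin{itemize}
\item \emph{Density.} Apply uniform density to the random set $U$ that takes one random vertex from each chosen cluster (or to unions of clusters). Averaging then shows that a uniformly random pattern is admissible with probability at least $d-\varepsilon$, so $d(Q)\ge d-\varepsilon$.
\item \emph{Embedding.} If a finite $F$ is $Q$-colorable via an order and a pair colouring $\phi$, map its vertices in order to distinct clusters $i_1<\dots<i_{|V(F)|}$ and its pairs to the bipartite graphs named by $\phi$. The hypergraph counting lemma then yields a copy of $F$ in $H$, provided $m\ge|V(F)|$ and the regularity parameters are fine enough relative to $|V(F)|$.
\end{itemize}

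\textbf{Main obstacle: infinite $\cF$.} The counting lemma needs parameters depending on $|V(F)|$, but $F$ is unknown before $Q$ is found. I resolve this by compactness via finiteness. There are only finitely many palettes with at most $N(\varepsilon)$ colours. For each such palette that colours some member of $\cF$, fix one witnessing member, and let $s$ be the largest number of vertices among these witnesses. Now choose the regularity and Ramsey parameters (including $m\ge s$) for graphs on $s$ vertices, then $\eta$, and then $n$. The palette $Q$ extracted from $H$ cannot colour any member of $\cF$: otherwise its fixed witness, which has at most $s$ vertices, would embed in $H$. Hence $\sup\{d(Q)\}\ge d-\varepsilon$. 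Letting $\varepsilon\to0$ and $d\to\pivvv(\cF)$ gives ``$\le$''.

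The delicate technical points are making colour labels consistent across cluster pairs (handled by the Ramsey step) and the averaging argument for $d(Q)$.
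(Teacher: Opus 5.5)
Your lower bound is correct; it is exactly the intended use of Lemma~\ref{lem:weighted-palette} together with $\lambda(Q)\ge d(Q)$. The paper does not prove this theorem. It imports Theorem~1.4 of Lin, Sun, Wang and Zhou, whose proof follows Lamaison's strategy plus an extra compactness argument, and only translates the coordinate order through $\rev$. Your upper bound has a genuine gap exactly at that extra step.

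The claim that the number of colours per pair is bounded in terms of $\varepsilon$ only is false. The R\"odl--Schacht lemma bounds the number $\ell$ of bipartite graphs per pair only by some $T_0(\delta_3,t_0,\varepsilon_2(\cdot))$, and the counting lemma forces $\delta_3$ to depend on the number of vertices of the graph being embedded. You define $s$ from the palettes with at most $N(\varepsilon)$ colours and then choose $\delta_3$ from $s$. But shrinking $\delta_3$ enlarges $N$, hence the list of palettes to be excluded, hence possibly $s$; nothing makes this stabilise.

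One way to break the circularity is to decouple the colour count from the regularity parameters. For a palette $Q=(C,T)$ with $|C|>k\ge3$ and a uniformly random $k$-set $S\subseteq C$, one has $\mathbb{E}\,|T\cap S^3|/k^3\ge(1-3/k)\,d(Q)$. Moreover, the inclusion of the induced sub-palette $(S,T\cap S^3)$ into $Q$ is a homomorphism. Write $s^*$ for the right-hand side of the theorem. Fix one $Q$-colourable member of $\cF$ for each of the finitely many (up to isomorphism) palettes $Q$ on at most $k$ colours with $d(Q)>s^*$. This gives a finite $\mathcal{G}_k\subseteq\cF$ such that every palette of density greater than $s^*+3/k$ colours a member of $\mathcal{G}_k$. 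Since $\pivvv(\cF)\le\pivvv(\mathcal{G}_k)$, this reduces the theorem to finite families.

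Your sketch does not establish the finite case either. The Ramsey step colours triples of clusters by $2^{\ell^3}$ patterns, so it needs $t$ to exceed a Ramsey number depending on $\ell$. It also needs the selected clusters to carry few irregular triads, since $T$ may only record regular dense triads and irregular ones eat into $d(Q)$. The regularity lemma controls irregular triads only globally, by $\delta_3t^3\ell^3$, and it fixes $t_0$ and $\delta_3$ before $\ell$ is revealed. So an adversarially selected Ramsey set of clusters may have too few clusters or consist of bad triples.

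The reduced-hypergraph framework of Reiher, R\"odl and Schacht avoids this by Ramsey-selecting an index set whose size does not depend on $\ell$, using only discretised tripartite densities. There, uniform density is applied to unions of clusters with $\eta$ chosen after $T_0$. Your alternative of one random vertex per cluster is vacuous, since then $d\binom{|U|}{3}-\eta n^3<0$. The missing idea is how to extract a single dense palette, consistent across the chosen indices, from such a reduced hypergraph when $\ell$ is unbounded. This is the substantive combinatorial step in the cited proofs, and your argument does not supply it. The label identification and averaging you flag as delicate are not where the difficulty lies.
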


%%%%%%%%%%%%%%%%%%%%%%%%%%%%%%%%%%%%%%%%%%%%%%%%%%%%
\subsection{A spectrally controlled tower of \texorpdfstring{$2$}{2}-lifts}
\label{sec:lift-tower}
%%%%%%%%%%%%%%%%%%%%%%%%%%%%%%%%%%%%%%%%%%%%%%%%%%%%

The purpose of this subsection is to prepare the graph sequence that will later carry the palette construction.
We first recall the $2$-lift operation and its elementary spectral decomposition.

\begin{figure}[H]
    \centering
    \begin{tikzpicture}[
        lift vertex/.style={
            circle,draw=black,fill=black!75,inner sep=1.3pt
        },
        lift edge/.style={draw=black,line width=1pt},
        panel label/.style={font=\small}
        ]
        % The base edge.
        \node[lift vertex,label=left:$u$]  (base-u) at (0,0) {};
        \node[lift vertex,label=right:$v$] (base-v) at (1.25,0) {};
        \draw[lift edge] (base-u) -- (base-v);
        % \node[panel label] at (0.625,-0.9) {$uv$};
        
        % The parallel matching.
        \node[lift vertex,label=left:$u^+$]  (par-u-plus)  at (3.45,0.45) {};
        \node[lift vertex,label=left:$u^-$]  (par-u-minus) at (3.45,-0.45) {};
        \node[lift vertex,label=right:$v^+$] (par-v-plus)  at (4.8,0.45) {};
        \node[lift vertex,label=right:$v^-$] (par-v-minus) at (4.8,-0.45) {};
        \draw[lift edge] (par-u-plus) -- (par-v-plus);
        \draw[lift edge] (par-u-minus) -- (par-v-minus);
        % \node[panel label] at (4.125,-0.9) {$s(uv)=+1$};
        
        % The crossed matching.
        \node[lift vertex,label=left:$u^+$]  (cross-u-plus)  at (6.95,0.45) {};
        \node[lift vertex,label=left:$u^-$]  (cross-u-minus) at (6.95,-0.45) {};
        \node[lift vertex,label=right:$v^+$] (cross-v-plus)  at (8.3,0.45) {};
        \node[lift vertex,label=right:$v^-$] (cross-v-minus) at (8.3,-0.45) {};
        \draw[lift edge] (cross-u-plus) -- (cross-v-minus);
        \draw[lift edge] (cross-u-minus) -- (cross-v-plus);
        % \node[panel label] at (7.625,-0.9) {$s(uv)=-1$};
    \end{tikzpicture}
    \caption{The two possible $L_s$-liftings of a base edge $uv$: the parallel matching for $s(uv)=+1$ and the crossed matching for $s(uv)=-1$.}
    \label{fig:two-lift}
\end{figure}

Let $G=(V,E)$ be a graph.
A \emph{signing} is a map $s\colon E\to\{+1,-1\}$.
Its \emph{signed adjacency matrix} $A_s$ is defined by
\[
(A_s)_{uv}\coloneqq \begin{cases}
    s(uv),&uv\in E,\\
    0,&uv\notin E.
\end{cases}
\]
The \emph{positive $2$-lift} $L_s(G)$ has vertex set $V\times\{+1,-1\}$ and edge set
\[
E(L_s(G))
=\bigl\{\{u^\sigma,v^{s(uv)\sigma}\}\colon
uv\in E,\ \sigma\in\{+1,-1\}\bigr\}.
\]
Here and throughout, $u^\sigma$ denotes the lifted copy $(u,\sigma)$ of a vertex $u$; also, we abbreviate $u^{+1}$ and $u^{-1}$ to $u^+$ and $u^-$ respectively.
The two vertices $u^+$ and $u^-$ form the \emph{lifted pair} corresponding to $u$.
The \emph{negative} (or \emph{complementary}) lift is $L_{-s}(G)$.
Thus, for each base edge $uv$, the matchings used by $L_s(G)$ and $L_{-s}(G)$ are disjoint and together form a copy of $K_{2,2}$ between the lifted pairs corresponding to $u$ and $v$ (see Figure~\ref{fig:two-lift}).

Let $A_+$ and $A_-$ be the adjacency matrices of the spanning subgraphs of $G$ formed by the positively and negatively signed edges, respectively.
Thus
\[
A_{+} = \frac{A(G)+A_{s}}{2},
\quad A_{-} = \frac{A(G)-A_{s}}{2}, 
\quad\text{and}\quad
A(L_s(G))=
\begin{pmatrix}
    A_+&A_-\\
    A_-&A_+
\end{pmatrix}.
\]

Note that the subspaces
\[
W_+\coloneqq\{(\mathbf{v},\mathbf{v})\colon\mathbf{v}\in\R^V\}
\quad\text{and}\quad
W_-\coloneqq\{(\mathbf{v},-\mathbf{v})\colon\mathbf{v}\in\R^V\}
\]
are invariant under $A(L_s(G))$.
Under the identifications $\mathbf{v}\mapsto(\mathbf{v},\mathbf{v})$ and $\mathbf{v}\mapsto(\mathbf{v},-\mathbf{v})$, the restrictions of $A(L_s(G))$ to $W_+$ and $W_-$ are represented by $A(G)$ and $A_s$, respectively.
Consequently, the following standard fact describes the spectrum of a $2$-lift.

\begin{fact}\label{fact:two-lift-spectrum}
    The spectrum of $L_s(G)$ is, with multiplicities, the union of the spectrum of $G$ and the spectrum of $A_s$.
    Similarly, the spectrum of $L_{-s}(G)$ is the union of the spectrum of $G$ and the spectrum of $A_{-s}=-A_s$.
\end{fact}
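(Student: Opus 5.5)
The block-diagonalization described just before the statement already contains the whole argument; what remains is to check that $W_+$ and $W_-$ are invariant, identify the restricted operators, and count eigenvalues using an orthogonal decomposition.

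Set $B\coloneqq A(L_s(G))=\begin{pmatrix}A_+&A_-\\A_-&A_+\end{pmatrix}$.

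First, for $\mathbf v\in\R^V$ we have
\[
B(\mathbf v,\mathbf v)=\bigl((A_++A_-)\mathbf v,(A_-+A_+)\mathbf v\bigr)=\bigl(A(G)\mathbf v,A(G)\mathbf v\bigr),
\]
and
\[
B(\mathbf v,-\mathbf v)=\bigl((A_+-A_-)\mathbf v,-(A_+-A_-)\mathbf v\bigr)=\bigl(A_s\mathbf v,-A_s\mathbf v\bigr).
\]
These use $A_++A_-=A(G)$ and $A_+-A_-=A_s$, both immediate from the defining formulas. Hence $W_+$ and $W_-$ are $B$-invariant. Under the linear isomorphisms $\mathbf v\mapsto(\mathbf v,\mathbf v)$ and $\mathbf v\mapsto(\mathbf v,-\mathbf v)$, the restrictions $B|_{W_+}$ and $B|_{W_-}$ are represented by $A(G)$ and $A_s$ respectively.

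Second, $W_+$ and $W_-$ are orthogonal, since $\langle(\mathbf v,\mathbf v),(\mathbf w,-\mathbf w)\rangle=0$. Each has dimension $|V|$, so $\R^{V\times\{\pm1\}}=W_+\oplus W_-$. Concretely, the orthogonal matrix $Q=\tfrac1{\sqrt2}\begin{pmatrix}I&I\\I&-I\end{pmatrix}$ satisfies
\[
Q^{\top}BQ=\operatorname{diag}\bigl(A(G),A_s\bigr).
\]
Since $B$ is similar to this block-diagonal matrix, its characteristic polynomial factors as $\det(tI-A(G))\det(tI-A_s)$. This gives the spectrum of $L_s(G)$ as the multiset union of the spectra of $G$ and $A_s$.

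Finally, $L_{-s}(G)$ is the positive $2$-lift for the signing $-s$, whose signed adjacency matrix is $A_{-s}=-A_s$. Applying the first statement to $-s$ then yields the second claim.

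There is no genuine obstacle here. The only point needing care is that multiplicities are preserved, and this is ensured by the direct-sum decomposition together with the similarity, not merely by the invariance of the two subspaces.
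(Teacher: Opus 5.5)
Your proof is correct and follows the paper's argument. The paper notes that $W_+$ and $W_-$ are invariant under $A(L_s(G))$, with restrictions represented by $A(G)$ and $A_s$; you additionally make the multiplicity count explicit through the orthogonal block-diagonalization by $Q$, and you obtain the second claim, as intended, by applying the first to the signing $-s$.
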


Our palette construction will use an arbitrarily long sequence of $2$-lifts, and at each step we choose a signing that controls the new eigenvalues of both complementary lifts.
The Bilu--Linial signing theorem~\cite[Theorem~3.1]{BiluLinial2006} provides the required signings.
In the form needed here, it states that there is an absolute constant $C$ such that, for every integer $d\ge2$, every graph of maximum degree at most $d$ admits a signing satisfying $\|A_s\|_{\op}\le C\sqrt{d\log^3 d}$. 
By the spectral decomposition above (Fact~\ref{fact:two-lift-spectrum}), this bound controls the new eigenvalues of $L_s(G)$ through $A_s$ and those of $L_{-s}(G)$ through $-A_s$.
Hence the same signing controls both complementary lifts, and iterating such signings yields the tower in the following proposition.

\begin{samepage}
    \begin{proposition}\label{prop:lift-tower}
        There exist an integer $d\ge160$ and a number $\rho\le d/100$ with the following property.
        For every integer $N_\star\ge1$ there is a sequence of connected $d$-regular graphs $D_0,D_1,D_2,\ldots$ and signings $s_i$ of $D_i$ such that for every integer $i\ge 0$ the following statements hold:
        \begin{enumerate}
            \item[(i)] $N_i\coloneqq |V(D_i)|=2^iN_0$ and $N_0\ge N_\star$;
            \item[(ii)] $D_{i+1}=L_{s_i}(D_i)$;
            \item[(iii)] if $D$ is $D_0$, $D_{i+1}$ or the complementary lift $D_{i+1}^-\coloneqq L_{-s_i}(D_i)$ then every eigenvalue of $D$ apart from the simple eigenvalue $d$ has absolute value at most $\rho$; in particular, the graph $D$ is connected.
        \end{enumerate}
    \end{proposition}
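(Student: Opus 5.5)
The plan is to first fix the constants, then build a suitable base graph $D_0$, and finally construct the tower inductively, using the Bilu--Linial signing theorem at every step.

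\textbf{Constants.} Let $C$ be the absolute constant in the Bilu--Linial theorem. Since $C\sqrt{d\log^3 d}=o(d)$, we may choose an integer $d\ge160$ with $C\sqrt{d\log^3 d}\le d/100$. We could simply set $\rho\coloneqq d/100$. It is cleaner, though, to set
\[
\rho\coloneqq\max\bigl\{C\sqrt{d\log^3 d},\,\rho_0\bigr\},
\]
where $\rho_0$ is the second-eigenvalue bound achieved by the base graph, and to enlarge $d$ if necessary so that $\rho\le d/100$.

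\textbf{Base graph.} We need, for every $N_\star$, a connected $d$-regular graph $D_0$ on $N_0\ge N_\star$ vertices whose nontrivial eigenvalues all have absolute value at most $\rho$. One route is to invoke known expander families, such as Ramanujan graphs or Friedman's theorem for random regular graphs. To stay self-contained, I would instead generate $D_0$ from the lift procedure itself. Start from $K_{d+1}$, whose nontrivial eigenvalues all equal $-1$. Apply Bilu--Linial signings repeatedly. By Fact~\ref{fact:two-lift-spectrum}, each lift keeps the old spectrum and adds the spectrum of $A_s$, which has operator norm at most $C\sqrt{d\log^3 d}$. So after $k$ lifts, every nontrivial eigenvalue is bounded by $\max\{1,C\sqrt{d\log^3 d}\}\le\rho$. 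Take $k$ large enough that $(d+1)2^k\ge N_\star$, and let $D_0$ be the resulting graph. This reduces the base case to the same mechanism as the inductive step.

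\textbf{Inductive step.} Given $D_i$ that is $d$-regular with all nontrivial eigenvalues of absolute value at most $\rho$, apply the Bilu--Linial theorem to obtain a signing $s_i$ with $\|A_{s_i}\|_{\op}\le C\sqrt{d\log^3 d}\le\rho$. Set $D_{i+1}\coloneqq L_{s_i}(D_i)$. Then:
\begin{itemize}
\item[$\bullet$] Both lifts $L_{\pm s_i}(D_i)$ are $d$-regular on $2N_i$ vertices, which gives (i) and (ii).
\item[$\bullet$] By Fact~\ref{fact:two-lift-spectrum}, the spectrum of $L_{\pm s_i}(D_i)$ is the spectrum of $D_i$ together with the spectrum of $\pm A_{s_i}$. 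The first part has $d$ simple and everything else within $\rho$, by induction. The second part lies in $[-\rho,\rho]$.
\item[$\bullet$] Since $\rho<d$, the eigenvalue $d$ is simple in both lifts. Hence both $D_{i+1}$ and $D_{i+1}^-$ are connected, because the multiplicity of the eigenvalue $d$ in a $d$-regular graph equals its number of components.
\end{itemize}
This establishes (iii) for $D_{i+1}$ and $D_{i+1}^-$, and the induction continues from $D_{i+1}$.

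\textbf{Main obstacle.} The only delicate point is the initial quantifier choice. We need a single $d$ and $\rho\le d/100$ that work uniformly for every $N_\star$ and every level $i$. This is exactly why it helps that the signing bound depends only on $d$ and not on the number of vertices. A secondary check is that the Bilu--Linial bound applies to the actual maximum degree $d$ of every graph in the tower; it does, since all lifts are $d$-regular.
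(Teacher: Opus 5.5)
Your proposal is correct and follows essentially the same route as the paper. You start from $K_{d+1}$, iterate Bilu--Linial lifts until there are at least $N_\star$ vertices to obtain $D_0$, and then continue the same procedure. You take $\rho=\max\{1,C\sqrt{d\log^3 d}\}\le d/100$ and deduce connectivity from simplicity of the eigenvalue $d$. Your alternative ``cleaner'' choice $\rho=\max\{C\sqrt{d\log^3 d},\rho_0\}$, combined with enlarging $d$, is non-circular only because the $K_{d+1}$ route gives $\rho_0=1$ independently of $d$, which is exactly the paper's choice.
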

\end{samepage}

\begin{proof}
    Choose $d$ sufficiently large so that, in particular, $d\ge160$ and $\rho\coloneqq \max\left\{ 1,~C\sqrt{d\log^3d} \right\}\le\frac d{100}$. Take any $N_\star$.
    Start with the complete graph $K_{d+1}$ and repeatedly apply the Bilu--Linial signing theorem.
    Every $2$-lift of a simple $d$-regular graph is again simple and $d$-regular.
    The non-trivial eigenvalues of $K_{d+1}$ are all $-1$.
    At every positive lift the old eigenvalues are retained while the new eigenvalues have absolute value at most $\rho$.
    After sufficiently many preliminary lifts, the resulting graph has at least $N_\star$ vertices; call it $D_0$. Starting with $D_0$, continue the same procedure defining $D_{i+1}$ as the Bilu--Linial lift $L_{s_i}(D_i)$. As before, the claimed spectral properties hold.
    
    All obtained graphs (as well as their complementary lifts) are connected since the multiplicity of the maximum eigenvalue $d$ of a $d$-regular graph is the number of its connected components.
\end{proof}

Note that a $d$-regular graph $D=(V,E)$ has $d$ as a simple eigenvalue and all other eigenvalues at most $\rho$ in absolute value if and only if $\big\|A(D)-\frac d{|V|} J_V\big\|_{\op}\le\rho$.

%%%%%%%%%%%%%%%%%%%%%%%%%%%%%%%%%%%%%%%%%%%%%%%%%%%%
\section{Projective limits of finite palettes}
\label{sec:projective-limits}
%%%%%%%%%%%%%%%%%%%%%%%%%%%%%%%%%%%%%%%%%%%%%%%%%%%%
Recall that \cite{LiuPikhurkoIntervals} shows that the uniform Tur\'an densities arising from finite families of $3$-graphs are dense in a non-degenerate interval.
This does not by itself yield an interval in $\PiU$: it is not known whether $\PiU$ is closed, so the limit of a convergent sequence of such densities need not \emph{a priori} belong to $\PiU$.

The following proposition bypasses this difficulty.
Given a compatible sequence of finite palettes whose Lagrangians decrease to $x$, it constructs a single, possibly infinite, forbidden family $\cF$ with $\pivvv(\cF)=x$.
Thus, instead of proving that $\PiU$ is closed, it is enough for our purposes to construct suitable projective chains of finite palettes and compute their limiting Lagrangians.

% The proof treats the two inequalities separately.
% For the lower bound, the weighted palette construction supplies dense $P_m$-colorable $3$-graphs whose densities approach $x$.
% For the upper bound, we list all finite palettes $Q$ with $\lambda(Q)>x$ and use palette separation to place in $\cF$ a finite obstruction that is $Q$-colorable but not $P_m$-colorable for a suitable $m$.
% The arbitrary-family palette characterization then rules out every uniform density strictly larger than $x$.

\begin{proposition}
    \label{prop:projective-chain}
    Let $P_1,P_2,\ldots$ be finite palettes such that $P_{m+1}\to P_m$ for every $m\ge1$.
    If $\lambda(P_m)\downarrow x$, then $x\in\PiU$.
\end{proposition}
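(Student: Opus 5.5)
We will construct a family $\cF$ and prove $\pivvv(\cF)\ge x$ and $\pivvv(\cF)\le x$ separately. Up to isomorphism there are only countably many finite palettes, so let $Q_1,Q_2,\ldots$ list all finite palettes $Q$ with $\lambda(Q)>x$. For each such $Q$ we choose an index $m=m(Q)$ with $\lambda(P_m)<\lambda(Q)$. This is possible because $\lambda(P_m)\downarrow x<\lambda(Q)$. Then there is no homomorphism $Q\to P_m$, and no homomorphism $Q\to\rev(P_m)$, since either would give $\lambda(Q)\le\lambda(P_m)$ by Fact~\ref{fact:palette-monotonicity}(3) together with $\lambda(\rev(P_m))=\lambda(P_m)$. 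By Theorem~\ref{thm:palette-separation} (applied with $P=Q$ and the other palette $P_m$), there is a finite $3$-graph $F_Q$ that is $Q$-colorable but not $P_{m(Q)}$-colorable. Set $\cF\coloneqq\{F_Q\colon \lambda(Q)>x\}$.

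\emph{Upper bound.} Let $Q$ be any finite palette with $d(Q)>x$. Then $\lambda(Q)\ge d(Q)>x$, so $F_Q\in\cF$ is $Q$-colorable. Hence every palette $Q$ such that no member of $\cF$ is $Q$-colorable satisfies $d(Q)\le x$. Theorem~\ref{thm:palette-characterization} then gives $\pivvv(\cF)\le x$.

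\emph{Lower bound.} Here the homomorphism chain is essential. Fix $m$. Every $P_m$-colorable $3$-graph is $\cF$-free. To see this, take any $F_Q\in\cF$ and put $k=m(Q)$. If $m\ge k$, composing the homomorphisms $P_m\to P_{m-1}\to\cdots\to P_k$ gives $P_m\to P_k$, so by Fact~\ref{fact:palette-monotonicity}(2) a $P_m$-colorable copy of $F_Q$ would make $F_Q$ $P_k$-colorable, a contradiction. Subgraph closure, Fact~\ref{fact:palette-monotonicity}(1), handles copies of $F_Q$ sitting inside a $P_m$-colorable host. The case $m<k$ is the main obstacle: we must ensure that no $P_m$-colorable graph contains $F_Q$. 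We resolve it by only using large $m$. For any $d<x$ and any $\eta>0$, $n_0$, choose $m\ge$ every index needed, but infinitely many $Q$ have finite indices, so no single $m$ works for all of them.

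The fix is to use the lower bound for each $F_Q$ individually and then take limits. Instead, we argue as follows. Since $\lambda(P_m)\ge x$ for all $m$, it suffices for each fixed $m$ to show that $P_m$-colorable graphs avoid $F_Q$ whenever $m(Q)\le m$. For the remaining $Q$ we can redefine the indices: take $m(Q)\coloneqq 1$ whenever possible? That fails. So we instead choose, for the lower bound, hosts that are $P_m$-colorable for \emph{every} $m$ simultaneously. This would require a common refinement, which does not exist in general.

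The cleanest route is therefore to notice that $\lambda(P_m)>x$ strictly, since the sequence is strictly decreasing. Each $P_m$ itself appears in the list, so we may choose $m(Q)$ as large as we like. We choose $m(Q_j)\ge j$ and, more importantly, large enough that $\lambda(P_{m(Q_j)})<\lambda(Q_j)$. The obstruction now is only that $F_{Q_j}$ might be $P_m$-colorable for $m<m(Q_j)$.

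To fix this, for a given $d<x$ we use hosts from Lemma~\ref{lem:weighted-palette} applied to $P_M$ with $M$ arbitrarily large. We need $F_{Q_j}$ to be non-$P_M$-colorable for all $j$, including $j$ with $m(Q_j)>M$. This forces us to pick the obstruction $F_Q$ to be non-$P_k$-colorable for \emph{all} $k\ge$ some $k_0(Q)$. That is achieved because non-$P_{k_0}$-colorability propagates upward along the chain: $P_k\to P_{k_0}$ for $k\ge k_0$, so if $F_Q$ were $P_k$-colorable it would be $P_{k_0}$-colorable.

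With this in hand, the lower bound goes through. Fix $d<x$, $\eta>0$ and $n_0$. A $P_M$-colorable host avoids every $F_Q$ with $m(Q)\le M$. We then handle the finitely many other $Q$ with $m(Q)>M$ by enlarging $M$, which is the delicate compactness point. The resolution is that for fixed $d,\eta,n_0$ we need only a single host graph $H$ on $n$ vertices. Such an $H$ contains only members of $\cF$ with at most $n$ vertices, and there are finitely many of these up to isomorphism. So we choose $M$ at least the maximum of $m(Q)$ over those finitely many $F_Q$, apply Lemma~\ref{lem:weighted-palette} to $P_M$ (noting $\lambda(P_M)-\varepsilon\ge d$), and obtain $H$. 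Since the number of vertices $n$ depends on $M$, we will need to check that the circularity is harmless by bounding $n$ in terms of $M$ before fixing $M$; this is the step I expect to need the most care.
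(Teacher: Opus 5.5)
Your obstructions $F_Q$ and your upper-bound argument are correct and match the paper's. The gap is in the lower bound, exactly at the circularity you flag in your last sentence and leave open.

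A $P_M$-colorable host is guaranteed to avoid only those members $F\in\cF$ with $\mu(F)\le M$. Here $\mu(F)$ is the least $k$ such that $F$ is not $P_k$-colorable. This per-graph quantity is the right one. ``The maximum of $m(Q)$ over those finitely many $F_Q$'' is not well defined, since infinitely many non-isomorphic $Q$ may produce isomorphic $F_Q$ with unbounded $m(Q)$.

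Lemma~\ref{lem:weighted-palette}, as stated, gives a host on \emph{at least} $n_0$ vertices, with no upper control. So its order $n$ may grow with $M$, and the members of $\cF$ on at most $n$ vertices may then demand a larger $M$. Nothing in your argument controls $\mu(F)$ for small members of $\cF$. Hence an a priori bound on $n$ in terms of $M$ does not close the loop, and as written the lower bound is not proved.

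The paper breaks the circularity by fixing the hosts before the forbidden family. It first uses Lemma~\ref{lem:weighted-palette} to choose graphs $G_m$ that are $P_m$-colorable and uniformly $(\lambda(P_m)-\varepsilon_m,1/m)$-dense, with strictly increasing orders $g_m$. Only then does it replace each obstruction $F_j$ (with index $k_j$) by a balanced blow-up on more than $g_{k_j-1}$ vertices. The blow-up is still $Q_j$-colorable and still not $P_{k_j}$-colorable. For $m<k_j$ it is too large to lie in $G_m$, and for $m\ge k_j$ it is excluded by $P_m\to P_{k_j}$. So every $G_m$ is $\cF$-free.

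Alternatively, you can keep your family and open up the proof of Lemma~\ref{lem:weighted-palette}. The random construction succeeds on every $n\ge n_0$ with $2^n\exp(-4\eta^2n^2)<1$, a condition independent of the palette. So fix $n$ first. Then take $M\ge\max\mu(F)$ over the finitely many isomorphism classes of members of $\cF$ on at most $n$ vertices; each $\mu(F)$ is finite because $F_Q$ is not $P_{m(Q)}$-colorable. Finally apply the construction to $P_M$ with $\varepsilon=\lambda(P_M)-d>0$. Either fix supplies the missing idea.
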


\begin{proof}
    If $x \in\{0,1\}$, the claim is immediate by taking $\cF$ to consist of the one-edge $3$-graph or to be the empty family.
    We may therefore assume $0<x<1$.
    For each $m$, set $\varepsilon_m\coloneqq \min\left\{\frac1m,\frac{\lambda(P_m)}2\right\}$. 
    Then $\varepsilon_m>0$ and $\varepsilon_m\to0$.
    
    There are only countably many finite palettes up to bijective relabeling of their color sets.
    Choose one representative of every isomorphism class with Lagrangian larger than $x$, and enumerate them as $Q_1,Q_2,\ldots$\,.
    For each $j$, choose $k_j$ such that $\lambda(P_{k_j})<\lambda(Q_j)$.
    By Fact~\ref{fact:palette-monotonicity}, there is no homomorphism from $Q_j$ to $P_{k_j}$ or to $\rev(P_{k_j})$.
    Hence Theorem~\ref{thm:palette-separation} gives a finite $3$-graph $F_j$ that is $Q_j$-colorable but not $P_{k_j}$-colorable.
    
    Set $g_0=0$.
    Inductively on $m=1,2,\ldots$\,, use Lemma~\ref{lem:weighted-palette}, with $n_0=g_{m-1}+1$, to choose a $P_m$-colorable $3$-graph $G_m$ such that $g_m\coloneqq |V(G_m)|>g_{m-1}$ and $G_m$ is uniformly $(\lambda(P_m)-\varepsilon_m,1/m)$-dense.
    
    For each integer $j\ge 1$, let $t_j$ be a positive integer such that $t_j|V(F_j)|>g_{k_j-1}$, and let $\widehat F_j$ be the \emph{balanced $t_j$-blow-up} of $F_j$.
    That is, replace every vertex $v$ of $F_j$ by a cluster $V_v$ of size $t_j$, and include exactly those triples that meet three distinct clusters whose corresponding vertices form an edge of $F_j$.
    Thus $|V(\widehat F_j)|>g_{k_j-1}$.
    
    The $3$-graph $\widehat F_j$ is $Q_j$-colorable: order the clusters according to a witnessing order of $F_j$, order the vertices arbitrarily within each cluster, and copy the witnessing pair color between every two distinct clusters.
    (Since no edge of $\widehat F_j$ contains two vertices from one cluster, we can color pairs inside clusters arbitrarily.)
    
    Furthermore, if $\widehat F_j$ were $P_{k_j}$-colorable, choose one vertex from every cluster.
    The induced subgraph is a copy of $F_j$, and restricting the witnessing order and pair coloring would make this copy $P_{k_j}$-colorable.
    Relabeling the copy by its cluster indices would then make $F_j$ itself $P_{k_j}$-colorable, a contradiction.
    
    Set $\cF\coloneqq \{\widehat F_j\colon j\ge1\}$. We claim that $\pivvv(\cF)=x$.

    First, let us show that every $3$-graph $G_m$ is $\cF$-free.
    Take any integer $j\ge 1$.
    If $k_j>m$, then the monotonicity of the sequence $(g_m)$ gives $|V(\widehat F_j)|>g_{k_j-1}\ge g_m=|V(G_m)|$, so $\widehat F_j$ cannot embed into $G_m$.
    If $k_j\le m$, composing the homomorphisms $P_m\to P_{m-1}\to\cdots\to P_{k_j}$ gives a homomorphism $P_m\to P_{k_j}$.
    By Fact~\ref{fact:palette-monotonicity}, every subgraph of the $P_m$-colorable graph $G_m$ is then $P_{k_j}$-colorable.
    Since $\widehat F_j$ is not $P_{k_j}$-colorable, it again cannot embed in $G_m$.
    
    We now verify the lower bound $\pivvv(\cF)\ge x$ with the quantifiers as in Definition~\ref{def:uniform-density}.
    Fix $0\le d<x$, $\eta>0$, and $n_0\in\N$.
    For all sufficiently large $m$, we have $g_m\ge n_0$, $\frac1m\le\eta$, and $\lambda(P_m)-\varepsilon_m\ge d$.
    Choose such an $m$.
    By the claim proved above, $G_m$ is $\cF$-free.
    Moreover, its constructed density parameter is at least $d$ and its error parameter is at most $\eta$, so it is uniformly $(d,\eta)$-dense.
    Hence $d\in A(\cF)$.
    Since this holds for every $d\in [0,x)$, we obtain $\pivvv(\cF)\ge x$.
    
    Suppose on the contrary that $\pivvv(\cF)>x$.
    Choose a real number $t$ with $x<t<\pivvv(\cF)$.
    By Theorem~\ref{thm:palette-characterization}, the supremum in that theorem is larger than $t$, so there exists a finite palette $Q$ such that $d(Q)>t>x$ and no member of $\cF$ is $Q$-colorable.
    Since $d(Q)\le\lambda(Q)$, the palette $Q$ is isomorphic to some $Q_j$.
    Transporting the color labels through this isomorphism, the $Q_j$-coloring of $\widehat F_j\in\cF$ is also a $Q$-coloring, a contradiction.
    
    Therefore $\pivvv(\cF)=x$, and hence $x\in\PiU$.
\end{proof}

%%%%%%%%%%%%%%%%%%%%%%%%%%%%%%%%%%%%%%%%%%%%%%%%%%%%
\section{The palette lift and its losses}
\label{sec:palette-lift}
%%%%%%%%%%%%%%%%%%%%%%%%%%%%%%%%%%%%%%%%%%%%%%%%%%%%

This section uses the graph tower from Proposition~\ref{prop:lift-tower} to build a recursive tower of palettes.
At every stage, the uniform weighting must remain the unique maximizer, while the construction must also permit small losses that can be chosen independently at successive levels.
Here is an informal overview of the construction; all objects used in it are defined precisely in the subsections below.
We start with a palette built from the initial regular graph in the tower (Section~\ref{sec:root-palette}).
At each subsequent level, every color is replaced by two lifted copies.
We then delete certain repeated-color patterns prescribed by the complementary graph lift, while the other graph lift is retained to govern the next recursive step (Section~\ref{sec:spectral-palette-lift} and Lemma~\ref{lem:exact-lift}).
Finally, we may delete up to four suitable tripartite perfect matchings (Section~\ref{sec:matching-deletions}).
The complementary-lift deletion is forced by the graph tower, whereas the matching deletions will later be chosen independently to create the different branches of the palette construction (Section~\ref{sec:digit-matchings}).
The natural projection from the doubled color set to its parent will give a palette homomorphism at every level (Section~\ref{sec:tower-stability}).

We first define the root palette and prove that its uniform weighting is quantitatively stable.
We next give the formal definition of the spectral palette lift and establish its exact loss identity.
After introducing the optional matching deletions, we use spectral expansion to control the loss caused by an arbitrary imbalance between lifted copies.
The final subsection assembles these one-step estimates into a precise recursive definition of a palette tower and proves uniform stability simultaneously at every level.

\subsection{The root palette}
\label{sec:root-palette}
An ordered triple of palette colors is called a \emph{repeated-color pattern} if it contains exactly two colors, one appearing twice and the other once.
Thus a pair $uv$ of colors can produce the following six repeated-color patterns:
\begin{equation}\label{eq:repeated-color-patterns}
    (u,u,v),\ (u,v,u),\ (v,u,u),\
    (v,v,u),\ (v,u,v),\ (u,v,v).
\end{equation}
Monochromatic triples $(u,u,u)$ are treated separately and are not included in this terminology.

Let $D=(V,E)$ be a simple $d$-regular graph on a color set $V$ of size $N$.
Define the \emph{root palette} $P(D)$ on the color set $V$ as follows:
\begin{itemize}
    \item every ordered triple of three pairwise distinct colors is admissible;
    \item for every edge $uv\in E(D)$, the six repeated-color patterns in Equation~\eqref{eq:repeated-color-patterns} are admissible;
    \item no monochromatic triple $(u,u,u)$ is admissible.
\end{itemize}
For a weighting $\mathbf{x}\in\Delta_V$, the palette polynomial of $P(D)$ is
\begin{equation}\label{eq:root-polynomial}
    \Lambda_{P(D)}(\mathbf{x})=1-3\sum_{v\in V} x_v^2+2\sum_{v\in V}x_v^3
    +3\sum_{uv\in E}(x_u^2x_v+x_v^2x_u).
\end{equation}
Here the first three terms give the total contribution of the patterns with three pairwise distinct colors, while the final sum accounts for the repeated-color patterns associated with the edges of $D$.
Since $D$ is $d$-regular and hence has $dN/2$ edges, evaluating Equation~\eqref{eq:root-polynomial} at the uniform weighting gives
\begin{equation}\label{eq:root-uniform}
    \Lambda_{P(D)}(\mathbf{u}_V)=1-\frac3N+\frac{2+3d}{N^2}.
\end{equation}

The next lemma shows that this value is quantitatively stable: moving away from the uniform weighting decreases the palette polynomial by at least a quadratic amount.

\begin{lemma}\label{lem:root-stability}
    For every fixed $d$ there are constants $N_{\mathrm{root}}(d)$ and $\kappa_d>0$ such that, whenever $D=(V,E)$ is a $d$-regular graph on $N\ge N_{\mathrm{root}}(d)$ vertices,
    \[
    \Lambda_{P(D)}(\mathbf{u}_V)-\Lambda_{P(D)}(\mathbf{x})\ge\kappa_d\|\mathbf{x}-\mathbf{u}_V\|_2^2
    \]
    for every $\mathbf{x}\in\Delta_V$.
\end{lemma}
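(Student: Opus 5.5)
The idea is to treat the quadratic term $-3\sum_v x_v^2$ in \eqref{eq:root-polynomial} as the source of stability. I would split into a local regime, where a Taylor expansion around $\mathbf{u}_V$ works, and a global regime, where a crude upper bound on $\Lambda_{P(D)}$ already lies a constant below $\Lambda_{P(D)}(\mathbf{u}_V)$. Write $\mathbf{x}=\mathbf{u}_V+\mathbf{y}$ with $\sum_v y_v=0$, and put $s\coloneqq\sum_v x_v^2=\frac1N+\|\mathbf{y}\|_2^2$. Let $c>0$ be a small absolute constant fixed in the local step.

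\emph{Local regime} ($\|\mathbf{y}\|_2\le c/d$). Expand each term of \eqref{eq:root-polynomial} in powers of $\mathbf{y}$.
\begin{itemize}
\item The constant terms reproduce \eqref{eq:root-uniform}.
\item The linear terms vanish. For $\sum_v x_v^3$ the gradient at $\mathbf{u}_V$ is constant. For the edge sum, $d$-regularity makes the gradient $\frac{3d}{N^2}\1$, which is also constant. Hence both linear terms are multiples of $\sum_v y_v=0$.
\item The quadratic part is $-3\|\mathbf{y}\|_2^2$ plus the terms $\frac{6}{N}\|\mathbf{y}\|_2^2$ from $2\sum_v x_v^3$ and $\frac{3}{N}\bigl(d\|\mathbf{y}\|_2^2+2\mathbf{y}^{\top}A(D)\mathbf{y}\bigr)$ from the edge sum. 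Since $\|A(D)\|_{\op}\le d$, these extra terms have absolute value at most $\frac{9(d+1)}{N}\|\mathbf{y}\|_2^2$.
\item The cubic part is $2\sum_v y_v^3+3\sum_{uv\in E}(y_u^2y_v+y_uy_v^2)$. Its absolute value is at most $(2+6d)\|\mathbf{y}\|_\infty\|\mathbf{y}\|_2^2\le(2+6d)\|\mathbf{y}\|_2^3$.
\end{itemize}
Therefore
\[
\Lambda_{P(D)}(\mathbf{u}_V)-\Lambda_{P(D)}(\mathbf{x})\ge\Bigl(3-\tfrac{9(d+1)}N-(2+6d)\|\mathbf{y}\|_2\Bigr)\|\mathbf{y}\|_2^2 .
\]
Choosing $c$ small (for instance $c=1/8$) and $N$ large in terms of $d$ makes this at least $\|\mathbf{y}\|_2^2$.

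\emph{Global regime} ($\|\mathbf{y}\|_2>c/d$). Here I would use a trivial global bound. For every $u$, the neighbour sum satisfies $\sum_{v\colon uv\in E}x_v\le 1-x_u$. Hence
\[
3\sum_{uv\in E}(x_u^2x_v+x_ux_v^2)=3\sum_u x_u^2\sum_{v\colon uv\in E}x_v\le 3\sum_u x_u^2(1-x_u)=3s-3\sum_ux_u^3 .
\]
Substituting this into \eqref{eq:root-polynomial} gives
\[
\Lambda_{P(D)}(\mathbf{x})\le 1-\sum_v x_v^3\le 1-s^2 ,
\]
where the last step is Cauchy--Schwarz: $s^2=\bigl(\sum_v x_v^{1/2}x_v^{3/2}\bigr)^2\le\sum_v x_v\cdot\sum_v x_v^3$. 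In this regime $s\ge c^2/d^2$. By \eqref{eq:root-uniform},
\[
\Lambda_{P(D)}(\mathbf{u}_V)-\Lambda_{P(D)}(\mathbf{x})\ge s^2-\frac3N\ge\frac{c^4}{2d^4}
\]
once $N\ge N_{\mathrm{root}}(d)$ is large. Since $\|\mathbf{y}\|_2^2\le 1$, this is at least $\frac{c^4}{2d^4}\|\mathbf{y}\|_2^2$.

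Combining the two regimes gives the lemma with $\kappa_d=\min\{1,c^4/(2d^4)\}$. The main care point is the local regime: the linear terms must cancel exactly, which is where $d$-regularity is used. The cubic error must be dominated by a quadratic of the form $\Omega(\|\mathbf{y}\|_2^2)$, and the global bound $1-s^2$ takes over exactly where this fails. If the expansion constants turn out worse than estimated, only $c$ and $N_{\mathrm{root}}(d)$ need adjusting.
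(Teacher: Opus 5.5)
Your proof is correct, but you handle the near-uniform regime by a different mechanism from the paper's.

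\textbf{Far regime.} Both arguments use the same bound $\Lambda_{P(D)}(\mathbf{x})\le 1-\sum_v x_v^3$ and compare it with $\Lambda_{P(D)}(\mathbf{u}_V)\ge 1-3/N$. The paper reads this bound off directly from the absence of admissible monochromatic triples; you derive it through neighbour sums. The paper splits according to whether $\max_v x_v$ exceeds $\theta=\frac1{2(2+3d)}$, so $\sum_v x_v^3\ge\theta^3$ is immediate. Your $\ell_2$ split instead needs the Cauchy--Schwarz step $\sum_v x_v^3\ge(\sum_v x_v^2)^2$.

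\textbf{Near-uniform regime.} The paper first applies $a^2b+ab^2\le a^3+b^3$ edge by edge. By regularity, this bounds $\Lambda_{P(D)}(\mathbf{x})$ above by the separable function $1+\sum_v\varphi(x_v)$, where $\varphi(t)=-3t^2+(2+3d)t^3$, with equality at $\mathbf{u}_V$. Since $\varphi''\le-3$ on $[0,\theta]$, a one-variable Taylor bound summed over coordinates gives a gain of $\frac32\|\mathbf{x}-\mathbf{u}_V\|_2^2$ with no further work.

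Your route keeps the graph inside the quadratic form. You therefore need the exact linear cancellation, the bound $\|A(D)\|_{\op}\le d$ on $\mathbf{y}^{\top}A(D)\mathbf{y}$, and an explicit cubic remainder estimate.

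\textbf{Trade-off.} Yours is the generic ``expand around a critical point'' argument and shows exactly where regularity enters. The paper's decoupling trick is shorter and gives slightly better constants: $\kappa_d=\theta^3/2\asymp d^{-3}$ and $N_{\mathrm{root}}\asymp d^{3}$, against your $\kappa_d\asymp d^{-4}$ and $N_{\mathrm{root}}\asymp d^{4}$. This difference is immaterial for Theorem~\ref{thm:tower-stability}.

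A minor point: your threshold $c/d$ presupposes $d\ge1$; for $d=0$, use $c$ instead.
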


\begin{proof} Set $\theta\coloneqq \frac1{2(2+3d)}$, $\kappa_d\coloneqq \frac{\theta^3}{2}$, and $N_{\mathrm{root}}(d)\coloneqq 6\theta^{-3}$.
    Let $\varphi(t) \coloneqq -3t^2+(2+3d)t^3$.
    For every $t\in [0,\theta]$, we have that $\varphi''(t)=-6+6(2+3d)t\le-3$. 
    
    Let $N$ and $D$ be as in the lemma. It holds that $1/N\le1/N_{\mathrm{root}}(d)\le \theta$.	
    For $a,b\ge0$, we have $a^2b+b^2a\le a^3+b^3$, because the difference is $(a+b)(a-b)^2$.
    Since $D$ is $d$-regular, Equation~\eqref{eq:root-polynomial} gives
    \begin{equation}\label{eq:root-upper}
        \Lambda_{P(D)}(\mathbf{x})\le1-3\sum_{v\in V} x_v^2+(2+3d)\sum_{v\in V}x_v^3, 
    \end{equation}
    and equality holds at $\mathbf{u}_V$.

    Suppose first that $\max_{v\in V} x_v\le\theta$. 
    For each $v\in V$, Taylor's theorem for $1/N$ and $x_v$, both in $[0,\theta]$, gives
    \[
    \varphi(x_v)\le \varphi\left(\frac{1}{N}\right)
    +\varphi'\left(\frac{1}{N}\right)
    \left(x_v-\frac{1}{N}\right)
    -\frac{3}{2}\left(x_v-\frac{1}{N}\right)^2.
    \]
    Summing these inequalities over all $v\in V$ and using $\sum_{v\in V}(x_v-1/N)=0$ yields
    \[
    \sum_{v\in V} \varphi(x_v)
    \le N\varphi\left(\frac{1}{N}\right)
    -\frac{3}{2}\|\mathbf{x}-\mathbf{u}_V\|_2^2.
    \]
    Together with Equation~\eqref{eq:root-upper}, and using equality in that equation at $\mathbf{u}_V$, this gives
    \[
    \Lambda_{P(D)}(\mathbf{u}_V)-\Lambda_{P(D)}(\mathbf{x})
    \ge\frac{3}{2}\|\mathbf{x}-\mathbf{u}_V\|_2^2.
    \]
    Since $0<\theta<1$ and hence $\kappa_d=\theta^3/2<3/2$, this is in fact stronger than the claimed bound.
    
    Finally, suppose that $\max_{v\in V} x_v>\theta$. Note that no admissible ordered triple is monochromatic.
    Since the sum of the monomials corresponding to all ordered triples is $1$, we have $\Lambda_{P(D)}(\mathbf{x})\le1-\sum_{v\in V}x_v^3\le1-\theta^3$. 
    By Equation~\eqref{eq:root-uniform} and the choice of $N_{\mathrm{root}}(d)$, we obtain $\Lambda_{P(D)}(\mathbf{u}_V)\ge1-\frac3N\ge1-\frac{\theta^3}{2}$. 
    Since $\|\mathbf{x}-\mathbf{u}_V\|_2^2\le1$, the asserted inequality follows with coefficient $\kappa_d=\theta^3/2$.
\end{proof}

\subsection{The spectral palette lift}
\label{sec:spectral-palette-lift}

We next describe the recursive step that passes from a parent palette to a palette on the doubled color set.
The complementary lift specifies which repeated-color patterns are deleted, and the exact lift identity below expresses their total contribution as a cubic graph functional.

Suppose that a palette $P=(V(D),T)$ contains all permutations of $(u,u,v)$ and $(v,v,u)$ for every $uv\in E(D)$.
We then call $D$ a \emph{distinguished repeated-color graph} of $P$; this designation does not exclude the presence of other repeated-color patterns in $P$.
Let $s$ be a signing of $D$, and let $\operatorname{pr}\colon V(D)\times\{+1,-1\}\to V(D)$ be the natural projection.
First double every color of $P$ without deleting any patterns; the resulting admissible set is
\[
T^{\mathrm{dbl}}
\coloneqq \left\{ (\widetilde a,\widetilde b,\widetilde c)\colon
(\operatorname{pr}(\widetilde a),\operatorname{pr}(\widetilde b),
\operatorname{pr}(\widetilde c))\in T \right\}.
\]
Thus a child triple is admissible precisely when its projection is admissible in $P$.
Write $P^{\mathrm{dbl}}\coloneqq (V(D)\times\{+1,-1\},T^{\mathrm{dbl}})$.
If $\mathbf{x}$ is a weighting of the doubled color set and $\mathbf{y}\coloneqq\operatorname{pr}_*\mathbf{x}$, then $\Lambda_{P^{\mathrm{dbl}}}(\mathbf{x})=\Lambda_P(\mathbf{y})$.
Conversely, given any $\mathbf{y}\in\Delta_{V(D)}$, the weighting defined by $x_{u^+}=y_u$ and $x_{u^-}=0$ satisfies $\operatorname{pr}_*\mathbf{x}=\mathbf{y}$.
It follows in both directions that color doubling preserves the Lagrangian:
\[
\lambda(P^{\mathrm{dbl}})=\lambda(P).
\]
For every edge $uv\in E(D)$ and every sign $\sigma$, delete all permutations of $(u^\sigma,u^\sigma,v^{-s(uv)\sigma})$ and of $(v^{-s(uv)\sigma},v^{-s(uv)\sigma},u^\sigma)$. 
Denote the resulting \emph{spectral palette lift} by $L_s(P,D)$. Thus $L_s(P,D)$ is obtained from $P^{\mathrm{dbl}}$ by deleting all repeated-color triples coming from the edges of the complementary lift $L_{-s}(D)$, exactly 12 triples for each edge of $D$. 
The projection $\operatorname{pr}$ is a palette homomorphism $L_s(P,D)\to P$.
Among the lifted copies of the repeated-color patterns distinguished by $D$, the retained patterns are exactly those associated with $L_s(D)$.
Thus the child palette $L_s(P,D)$ carries $L_s(D)$ as a distinguished repeated-color graph.

For a graph $G$ and a nonnegative vector $\mathbf{x}$ indexed by $V(G)$, let $\mathbf{x}^{\square}\coloneqq (x_v^2)_{v\in V(G)}$ denote the coordinatewise square of $\mathbf{x}$, and define the \emph{cubic edge functional}
\begin{equation}\label{eq:cubic-functional}
    \mathcal{E}_G(\mathbf{x})\coloneqq \sum_{uv\in E(G)}x_ux_v(x_u+x_v)
    =\langle \mathbf{x}^{\square},A(G)\mathbf{x}\rangle.
\end{equation}

The following identity separates the unchanged contribution inherited from the parent palette from the exact loss caused by the patterns deleted along the complementary lift.

\begin{lemma}\label{lem:exact-lift}
    Let $D$ be a distinguished repeated-color graph of a palette $P$, let $s$ be a signing of $D$, and put $P^+\coloneqq L_s(P,D)$ and $D^-\coloneqq L_{-s}(D)$.
    For a weighting $\mathbf{x}$ of the child colors, let $\mathbf{y}\coloneqq \operatorname{pr}_*\mathbf{x}$ be its push-forward to $V(D)$; explicitly, $y_u=x_{u^+}+x_{u^-}$.
    Then
    \begin{equation}\label{eq:lift-identity}
        \Lambda_{P^+}(\mathbf{x})=\Lambda_P(\mathbf{y})-3\mathcal{E}_{D^-}(\mathbf{x}).
    \end{equation}
    If, in addition, $D$ is $d$-regular on $N$ vertices, then at the uniform weighting $\mathbf{u}_{2N}$ the loss is
    \begin{equation}\label{eq:lift-uniform-loss}
        3\mathcal{E}_{D^-}(\mathbf{u}_{2N})=\frac{3d}{4N^2}.
    \end{equation}
\end{lemma}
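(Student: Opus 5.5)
The plan is to write $\Lambda_{P^+}(\mathbf{x})$ as the polynomial of $P^{\mathrm{dbl}}$ minus the contribution of the deleted triples, and then to identify the deleted set exactly with the repeated-color patterns of the edges of $D^-$.

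First step. By the observation preceding the lemma, $\Lambda_{P^{\mathrm{dbl}}}(\mathbf{x})=\Lambda_P(\mathbf{y})$. To see this, expand $\Lambda_P(\mathbf{y})=\sum_{(a,b,c)\in T}y_ay_by_c$ and substitute $y_u=x_{u^+}+x_{u^-}$. Distributing the product gives exactly the monomials of the lifted triples, which are all admissible in $T^{\mathrm{dbl}}$.

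Second step. The edges of $D^-=L_{-s}(D)$ are exactly the pairs $\{u^\sigma,v^{-s(uv)\sigma}\}$ for $uv\in E(D)$ and $\sigma\in\{\pm1\}$. The deleted triples are all permutations of $(a,a,b)$ and $(b,b,a)$ for each such edge $ab$, that is, the six repeated-color patterns in Equation~\eqref{eq:repeated-color-patterns}. I would check three facts:
\begin{itemize}
\item[(a)] each deleted triple was actually present in $T^{\mathrm{dbl}}$, because its projection is a permutation of $(u,u,v)$ or $(v,v,u)$ with $uv\in E(D)$, which lies in $T$ since $D$ is distinguished;
\item[(b)] distinct edges of $D^-$ give disjoint sets of patterns, because a repeated-color pattern determines its unordered pair of colors;
\item[(c)] the two signs $\sigma=\pm1$ give two distinct edges $\{u^+,v^{-s(uv)}\}$ and $\{u^-,v^{s(uv)}\}$, so there are $12$ triples per edge of $D$, and there is no double counting.
\end{itemize}
Given (a)--(c), the removed mass is the sum over edges $ab\in E(D^-)$ of $3x_a^2x_b+3x_b^2x_a=3x_ax_b(x_a+x_b)$, which equals $3\mathcal{E}_{D^-}(\mathbf{x})$. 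This proves \eqref{eq:lift-identity}.

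Third step. For the uniform weighting, $D^-$ is a $2$-lift of a $d$-regular graph on $N$ vertices, so it has $dN$ edges. Each edge contributes $\frac{1}{2N}\cdot\frac{1}{2N}\cdot\frac{2}{2N}=\frac{1}{4N^3}$ to $\mathcal{E}_{D^-}$. Hence $3\mathcal{E}_{D^-}(\mathbf{u}_{2N})=3dN/(4N^3)=\frac{3d}{4N^2}$, which is \eqref{eq:lift-uniform-loss}.

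The main obstacle, though a mild one, is the bookkeeping in the second step. The point to verify carefully is that no deleted triple is counted twice. This requires $u\ne v$, which holds because $D$ is simple, and the two lifted edges for $\sigma=\pm1$ to be genuinely distinct, which they are.
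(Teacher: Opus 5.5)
Your proposal is correct and follows essentially the same route as the paper. You identify the doubled polynomial with $\Lambda_P(\mathbf{y})$, recognize the deleted triples as the six repeated-color patterns of each edge of $D^-$ (each edge contributing $3x_ax_b(x_a+x_b)$), and evaluate at the uniform weighting using that $D^-$ is $d$-regular on $2N$ vertices. Your checks (a)--(c) — that the deleted triples were present because $D$ is distinguished, and that nothing is counted twice — are exactly the bookkeeping the paper leaves implicit.
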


\begin{proof}
    The eight lifted copies of each parent triple contribute the product of its three aggregate weights, so before the deletions the palette polynomial of the doubled palette evaluates to $\Lambda_P(\mathbf{y})$ at $\mathbf{x}$.
    The deleted repeated-color triples are exactly the six ordered patterns associated with every edge of the negative lift $D^-$.
    Their total contribution to the palette polynomial is $3\mathcal{E}_{D^-}(\mathbf{x})$, proving Equation~\eqref{eq:lift-identity}.
    Under the additional assumption that $D$ is $d$-regular on $N$ vertices, the graph $D^-$ is $d$-regular on $2N$ vertices, so $\mathcal{E}_{D^-}(\mathbf{u}_{2N})=d/(2N)^2$.
\end{proof}

\subsection{Tripartite perfect matchings}
\label{sec:matching-deletions}

The complementary-lift deletion is determined by the graph tower and therefore creates no choice between branches.
To construct an interval, we also need optional deletions that can be chosen independently at different levels.
Tripartite perfect matchings provide these branching choices: their cost is easy to compute at the uniform weighting and remains controlled for arbitrary weightings.

Let $C$ be a color set of size $N$.
A \emph{tripartite perfect matching} on $C$ is a collection
\[
\cM=\{(a_i,b_i,c_i)\colon i\in[N]\}\subseteq C^3
\]
for which each of the maps $i\mapsto a_i$, $i\mapsto b_i$, and $i\mapsto c_i$ is a bijection from $[N]$ to $C$.
Equivalently, if the three coordinate positions are viewed as three labeled copies of $C$, then the triples in $\cM$ form a perfect matching in the complete tripartite $3$-graph.
Two tripartite perfect matchings on $C$ are \emph{pattern-disjoint} if they are disjoint as subsets of $C^3$.
A collection of tripartite perfect matchings is \emph{pairwise pattern-disjoint} if every two distinct members are pattern-disjoint.
If $P=(C,T)$ is a palette, then a tripartite perfect matching $\cM$ on $C$ is \emph{admissible in $P$} if $\cM\subseteq T$.
For a weighting $\mathbf{x}\in\Delta_C$, define
\begin{equation}\label{eq:R}	
    R_{\cM}(\mathbf{x})\coloneqq \sum_{i=1}^N x_{a_i}x_{b_i}x_{c_i}.
\end{equation}
If the patterns in $\cM$ are deleted from a palette, then its palette polynomial decreases by exactly $R_{\cM}(\mathbf{x})$.
At the uniform weighting, each of the $N$ patterns has weight $N^{-3}$, and hence $R_{\cM}(\mathbf{u}_N)=1/N^2$. 

The next lemma shows that, away from the uniform weighting, the matching loss can fall below this value only by an amount controlled quadratically by the deviation from uniformity.

\begin{lemma}\label{lem:matching-bound}
    For every tripartite perfect matching $\cM$ on $N$ colors and every $\mathbf{x}\in\Delta_C$, 
    \begin{equation}\label{eq:matching-bound}
        R_{\cM}(\mathbf{x})-\frac1{N^2}
        \ge-\frac3N\|\mathbf{x}-\mathbf{u}_N\|_2^2.
    \end{equation}
\end{lemma}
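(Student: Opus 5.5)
The plan is to expand each matching term around the uniform weighting and compare it, pattern by pattern, with a quadratic lower bound. Write $\mathbf{x}=\mathbf{u}_N+\mathbf{z}$, so $\sum_{c\in C}z_c=0$ and $\|\mathbf{z}\|_2=\|\mathbf{x}-\mathbf{u}_N\|_2$.

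The goal is the following termwise inequality. For every triple $(a,b,c)$ and every nonnegative $x_a,x_b,x_c$,
\[
x_ax_bx_c\ \ge\ \frac1{N^3}+\frac{z_a+z_b+z_c}{N^2}-\frac{z_a^2+z_b^2+z_c^2}{N}.
\]
Granting this, sum it over the $N$ triples $(a_i,b_i,c_i)$ of $\cM$. The constant terms add up to $1/N^2$. Each coordinate map $i\mapsto a_i$, $i\mapsto b_i$, $i\mapsto c_i$ is a bijection onto $C$, so the linear terms add up to $3\sum_c z_c/N^2=0$. For the same reason the quadratic terms add up to $-3\|\mathbf{z}\|_2^2/N$. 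This gives exactly \eqref{eq:matching-bound}.

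To prove the termwise inequality, rescale by setting $p=Nx_a$, $q=Nx_b$, $r=Nx_c$, all nonnegative. Multiplying by $N^3$ turns the claim into
\[
pqr\ \ge\ 1+\bigl((p-1)+(q-1)+(r-1)\bigr)-\bigl((p-1)^2+(q-1)^2+(r-1)^2\bigr).
\]
The cubic term is the only place where a naive Taylor expansion fails, because $z_az_bz_c$ can be negative. The remedy is simply to discard it using $pqr\ge0$. It then remains to show that the right-hand side is at most $0$.

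Split the constant as $1=3\cdot\tfrac13$ and group by variable. For each variable $w\in\{p,q,r\}$,
\[
(w-1)^2-(w-1)+\tfrac13=\bigl(w-\tfrac32\bigr)^2+\tfrac1{12}>0 .
\]
Summing over $p,q,r$ shows that the right-hand side is negative, hence at most $pqr$, and the lemma follows.
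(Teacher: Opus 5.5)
Your reduction is correct and matches the paper's. After rescaling, your termwise inequality is exactly the paper's auxiliary inequality $pqr\ge p+q+r-2-\bigl((p-1)^2+(q-1)^2+(r-1)^2\bigr)$ for $p,q,r\ge0$. Summing it over the triples of the matching, using that each coordinate map is a bijection, gives the lemma.

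The gap is in your proof of that inequality: you cannot discard $pqr$. At $p=q=r=1$, which corresponds to the uniform weighting and is precisely the equality case of the lemma, the right-hand side equals $1$. It is certainly not at most $0$. The slip is a sign in the regrouping. With $t=w-1$ the right-hand side is $\sum_{w}\bigl(\tfrac13+t-t^2\bigr)$, so its negative is $\sum_w\bigl((w-1)^2-(w-1)-\tfrac13\bigr)=\sum_w\bigl((w-\tfrac32)^2-\tfrac7{12}\bigr)$. That is not the expression with $+\tfrac13$ that you completed to a square. Consequently the right-hand side is strictly positive whenever all of $p,q,r$ lie within $\sqrt{7/12}$ of $\tfrac32$.

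Because the inequality is tight at $(1,1,1)$, where $pqr$ supplies the entire constant $1$, any proof must keep the cubic term. It must also use the constraint $p,q,r\ge0$. Without it the inequality fails: for $p=q=3$, $r=-3$ the left side is $-27$ while the right side is $-23$.

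The paper handles this by showing that $F(a,b,c)=a^2+b^2+c^2+abc-3(a+b+c)+5$ is coercive on the closed orthant, positive on its boundary, and has $(1,1,1)$ as its only interior critical point.

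A short repair in the spirit of your expansion is as follows. Write $p=1+\alpha$, $q=1+\beta$, $r=1+\gamma$ with $\alpha,\beta,\gamma\ge-1$. Then the difference of the two sides is $\alpha^2+\beta^2+\gamma^2+\alpha\beta+\beta\gamma+\gamma\alpha+\alpha\beta\gamma$.
\begin{itemize}
\item If $\alpha\beta\gamma\ge0$, this is at least $\tfrac12(\alpha+\beta+\gamma)^2+\tfrac12(\alpha^2+\beta^2+\gamma^2)\ge0$.
\item Otherwise some coordinate, say $\gamma$, is negative, and then $\alpha\beta>0$. So $\gamma\ge-1$ gives $\alpha\beta\gamma\ge-\alpha\beta$, and the difference is at least $\alpha^2+\beta^2+\gamma^2+\gamma(\alpha+\beta)=\tfrac12(\alpha+\gamma)^2+\tfrac12(\beta+\gamma)^2+\tfrac12(\alpha^2+\beta^2)\ge0$.
\end{itemize}
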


\begin{proof}
    We first prove the auxiliary inequality
    \begin{equation}\label{eq:abc}
        abc\ge a+b+c-2-\bigl((a-1)^2+(b-1)^2+(c-1)^2\bigr)
    \end{equation}
    for all $a,b,c\ge0$.
    The difference between the two sides is
    \[
    F(a,b,c)\coloneqq a^2+b^2+c^2+abc-3a-3b-3c+5.
    \]
    The quadratic terms imply $F(a,b,c)\to\infty$ whenever $a^2+b^2+c^2\to\infty$ in $[0,\infty)^3$ (the term $abc$ is nonnegative).
    Thus $F$ is coercive and attains a minimum on this closed orthant.
    On the boundary, for example when $c=0$,
    \[
    F(a,b,0)=\left(a-\frac32\right)^2
    +\left(b-\frac32\right)^2+\frac12>0,
    \]
    and the other boundary cases are symmetric.
    At an interior critical point, $2a+bc=2b+ac=2c+ab=3$. 
    Subtracting pairs gives $(a-b)(2-c)=(b-c)(2-a)=(c-a)(2-b)=0$. 
    If none of $a,b,c$ equals $2$, then $a=b=c$, and the critical-point equation becomes $2a+a^2=3$.
    Its roots are $1$ and $-3$, so nonnegativity gives $a=b=c=1$.
    If, say, $c=2$, then the third critical-point equation $2c+ab=3$ is impossible for $a,b\ge0$.
    The other cases are symmetric.
    Thus the only interior critical point is $(1,1,1)$, where $F=0$, and Equation~\eqref{eq:abc} follows.
    
    Put $w_c=Nx_c$ and $H=\sum_c(w_c-1)^2$.
    Apply Equation~\eqref{eq:abc} to every triple $(w_{a_i},w_{b_i},w_{c_i})$ and sum over $i$.
    Since every color occurs exactly once in each coordinate, we have $\sum_iw_{a_i}w_{b_i}w_{c_i}\ge N-3H$. 
    Dividing by $N^3$ and using $H=N^2\|\mathbf{x}-\mathbf{u}_N\|_2^2$ proves the result.
\end{proof}

%%%%%%%%%%%%%%%%%%%%%%%%%%%%%%%%%%%%%%%%%%%%%%%%%%%%
\subsection{A global cubic expander inequality}
\label{sec:cubic-expander}
%%%%%%%%%%%%%%%%%%%%%%%%%%%%%%%%%%%%%%%%%%%%%%%%%%%%

By Lemma~\ref{lem:exact-lift}, the loss caused by the complementary-lift deletion is exactly the cubic functional $3\mathcal{E}_G(\mathbf{x})$.
For a $d$-regular graph $G$ on $M$ vertices, this loss equals $3d/M^2$ at the uniform weighting.
In the following lemma, in order to compare the loss under an arbitrary weighting with its value at the uniform weighting, we separate the behavior within and between the prescribed pairs: the vector $\mathbf{z}$ measures the imbalance between the two entries of each pair, while $\mathbf{y}$ records their total weight.
The spectral gap of $G$ forces a positive loss proportional to $\|\mathbf{z}\|_2^2$, up to an error controlled by $\|\mathbf{y}-\mathbf{u}_N\|_2^2$.
The next lemma gives this estimate globally, for every weighting, and is the analytic core of the construction.

\begin{lemma}\label{lem:cubic-expander}
    Let $G$ be a $d$-regular graph on $M=2N$ vertices, paired as $v^+,v^-$ for $v\in[N]$.
    Suppose that $\left\|A(G)-\frac dM J\right\|_{\op}\le\rho\le\frac d{100}$. 
    For $\mathbf{x}\in\Delta_{V(G)}$, define vectors $\mathbf{y},\mathbf{z}\in\R^N$ by $y_v \coloneqq x_{v^+}+x_{v^-}$ and $z_v \coloneqq x_{v^+}-x_{v^-}$.
    Then
    \begin{equation}\label{eq:cubic-expander}
        3\mathcal{E}_G(\mathbf{x})-\frac{3d}{M^2}
        \ge\frac12\frac dN\|\mathbf{z}\|_2^2
        -\frac{27}{20}\frac dN\|\mathbf{y}-\mathbf{u}_N\|_2^2.
    \end{equation}
\end{lemma}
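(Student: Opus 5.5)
The plan is to split $A(G)$ into its rank-one mean part and a spectrally small remainder, to evaluate the mean part exactly, and to absorb the remainder into the gain that the mean part produces. Write $B\coloneqq A(G)-\frac dM J$, so that $\|B\|_{\op}\le\rho$. Because $G$ is $d$-regular, $B\1=0$. Put $\mathbf{e}\coloneqq\mathbf{x}-\mathbf{u}_M$ and $\mathbf{w}\coloneqq\mathbf{y}-\mathbf{u}_N$.

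First I compute the mean part. Since $\sum_v x_v=1$, we get $\langle\mathbf{x}^\square,\frac dM J\mathbf{x}\rangle=\frac dM\|\mathbf{x}\|_2^2$. The pairing gives $\|\mathbf{x}\|_2^2=\frac12(\|\mathbf{y}\|_2^2+\|\mathbf{z}\|_2^2)$, and $\|\mathbf{y}\|_2^2=\frac1N+\|\mathbf{w}\|_2^2$. Hence
\[
3\mathcal{E}_G(\mathbf{x})=\frac{3d}{M^2}+\frac{3d}{4N}\bigl(\|\mathbf{w}\|_2^2+\|\mathbf{z}\|_2^2\bigr)+3\langle\mathbf{x}^\square,B\mathbf{x}\rangle .
\]
So \eqref{eq:cubic-expander} follows once I show
\[
3|\langle\mathbf{x}^\square,B\mathbf{x}\rangle|\le\frac14\frac dN\|\mathbf{z}\|_2^2+\frac{21}{10}\frac dN\|\mathbf{w}\|_2^2 .
\]
It would suffice to bound the left side by $\frac dM\|\mathbf{e}\|_2^2=\frac d{4N}(\|\mathbf{w}\|_2^2+\|\mathbf{z}\|_2^2)$. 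Note, however, that the allowed coefficient on $\|\mathbf{w}\|_2^2$ is much larger than the one on $\|\mathbf{z}\|_2^2$; I would exploit this slack below.

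Next I bound the error term. Because $B\1=0$ and $B$ is symmetric, we have $B\mathbf{x}=B\mathbf{e}$, and any constant vector may be subtracted from $\mathbf{x}^\square$. This gives
\[
|\langle\mathbf{x}^\square,B\mathbf{x}\rangle|\le\rho\,\bigl\|\mathbf{x}^\square-M^{-2}\1\bigr\|_2\,\|\mathbf{e}\|_2 .
\]
Coordinatewise, $\mathbf{x}^\square-M^{-2}\1=\mathbf{e}\circ(\mathbf{e}+2\mathbf{u}_M)$. The part $2M^{-1}\mathbf{e}$ contributes at most $\frac{6\rho}M\|\mathbf{e}\|_2^2\le0.06\,\frac dM\|\mathbf{e}\|_2^2$, which is harmless. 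What remains is the genuinely cubic part $3\rho\|\mathbf{e}\circ\mathbf{e}\|_2\|\mathbf{e}\|_2\le3\rho\|\mathbf{e}\|_\infty\|\mathbf{e}\|_2^2$.

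Before estimating the cubic part, I handle large deviations by a separate reduction. Since $|z_v|\le y_v$, we have $\|\mathbf{z}\|_2^2\le\frac1N+\|\mathbf{w}\|_2^2$. The left side of \eqref{eq:cubic-expander} is at least $-\frac{3d}{M^2}=-\frac{3d}{4N^2}$. The right side is at most $\frac dN\bigl(\frac1{2N}-\frac{17}{20}\|\mathbf{w}\|_2^2\bigr)$. So the inequality holds trivially once $\|\mathbf{w}\|_2^2\ge\frac{25}{17N}$. Henceforth I assume $\|\mathbf{w}\|_2^2<\frac{25}{17N}$, which bounds every coordinate of $\mathbf{x}$ by $O(N^{-1/2})$.

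The main obstacle is the cubic remainder $3\rho\|\mathbf{e}\|_\infty\|\mathbf{e}\|_2^2$. The reduction above only gives $\|\mathbf{e}\|_\infty=O(N^{-1/2})$, whereas I need $\|\mathbf{e}\|_\infty=O(1/N)$ for this term to be absorbed by $\frac d{4N}\|\mathbf{e}\|_2^2$. To close the gap I would refine the case analysis by thresholding on $\|\mathbf{x}\|_\infty$ rather than on $\|\mathbf{w}\|_2$.
\begin{itemize}
\item If some coordinate satisfies $x_v\ge K/N$ for a suitable absolute constant $K$, I would lower-bound $\mathcal{E}_G(\mathbf{x})$ directly. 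The graph has nonnegative entries, and the $d$ neighbours of a heavy vertex contribute $x_v^2\sum_{u\sim v}x_u$ to $\mathcal{E}_G(\mathbf{x})$. Expansion, via the expander mixing lemma applied with $\|B\|_{\op}\le\rho\le d/100$, should force this neighbourhood sum to be substantial, so the cubic functional itself outgrows the right side, whose size is governed by $\|\mathbf{y}\|_2^2$.
\item If instead $\|\mathbf{x}\|_\infty<K/N$, then $\|\mathbf{e}\|_\infty<K/N$, and the cubic term is at most $\frac{3K\rho}{N}\|\mathbf{e}\|_2^2$. With $\rho\le d/100$ this is absorbed into $\frac d{4N}\|\mathbf{e}\|_2^2$ provided $K$ is moderate, and I can also use the spare $\frac{21}{10}$ coefficient on $\|\mathbf{w}\|_2^2$.
\end{itemize}
Balancing $K$ between these two regimes, so that the numerical constants $\frac12$ and $\frac{27}{20}$ come out exactly, is the step I expect to be the delicate one.
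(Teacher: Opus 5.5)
Your opening steps are correct: the identity $3\mathcal{E}_G(\mathbf{x})=\frac{3d}{M^2}+\frac{3d}{4N}(\|\mathbf{w}\|_2^2+\|\mathbf{z}\|_2^2)+3\langle\mathbf{x}^{\square},B\mathbf{x}\rangle$, the reduction to a bound on the $B$-term, the estimate through $\mathbf{e}\circ(\mathbf{e}+2\mathbf{u}_M)$, the absorption when $\|\mathbf{x}\|_\infty\le K/N$ with $K$ up to about $15$, and the trivial case $\|\mathbf{w}\|_2^2\ge\frac{25}{17N}$ all check out. The gap is the heavy regime, and no choice of $K$ closes it.

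Expansion says nothing about the neighbourhood of a single vertex: it has only $d$ vertices, and $\mathbf{x}$ may vanish there. More fundamentally, a coordinate of size $K/N$ with $K$ constant is only a local perturbation. Take $x_{v_0^+}=K/N$, $x_{v_0^-}=0$ and all other coordinates equal. Then $3\mathcal{E}_G(\mathbf{x})-\frac{3d}{M^2}=O(dK^2/N^3)$. The bound on the right-hand side that you can get from $\mathbf{y}$ alone via $|z_v|\le y_v$ is $\frac{d}{2N^2}-O(dK^2/N^3)$, which is much larger. The lemma holds for this $\mathbf{x}$ only because $\mathbf{z}$ vanishes off $v_0$, and that is exactly the information your heavy case discards. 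If instead the light pairs carry a large imbalance, the inequality is carried by the $\mathbf{z}$-penalty on the light pairs. That penalty needs the spectral argument on those pairs even though a heavy coordinate is present. So the range $15/N\lesssim\|\mathbf{x}\|_\infty\lesssim N^{-1/2}$ is covered by neither of your regimes.

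The missing idea is to truncate rather than split into cases. Call a pair heavy if $y_v>10/N$, and set both of its coordinates to $0$. Since $\mathcal{E}_G$ has nonnegative coefficients, this can only decrease it. For a heavy pair, $y_v\le\frac{10}{81}Nw_v^2$ and $z_v^2\le y_v^2\le\frac{100}{81}w_v^2$. So both the removed mass and the removed part of $\|\mathbf{z}\|_2^2$ are charged to $\|\mathbf{w}\|_2^2$; this is where the large coefficient $\frac{27}{20}$ is spent.

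The truncated vector has mean $\mu/M$, and each coordinate deviates from it by at most $20/M$. Your $J$/$B$ computation, recentred at $\frac{\mu}{M}\1$ and paying $\frac{3d}{M^2}(1-\mu^3)\le\frac{9d}{M^2}(1-\mu)$ for the lost mass, then keeps the cubic term under control. To land on the exact constants $\frac12$ and $\frac{27}{20}$ you also need $\mu$ close to $1$. The paper secures $\mu\ge\frac{223}{243}$ by first disposing of the case $\|\mathbf{w}\|_2^2>\frac25\|\mathbf{z}\|_2^2$ (your notation). It does this with the coarse bound $\langle\mathbf{g}^{\square},A(G)\mathbf{g}\rangle\ge-d\|\mathbf{g}\|_2^2$ for $\mathbf{g}=M\mathbf{x}-\1$, valid because $A(G)\mathbf{g}\ge-d\1$ coordinatewise. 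This gives a smaller removed mass than your trivial reduction does.
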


\begin{proof}
    Define vectors $\mathbf{q}\in\R^{V(G)}$ and $\mathbf{w},\mathbf{b}\in\R^N$ by $q_{v^\pm}\coloneqq Mx_{v^\pm}$, $w_v\coloneqq Ny_v$, and $b_v\coloneqq Nz_v$.
    Then $q_{v^\pm}=w_v\pm b_v$, $\sum_vw_v=N$, and $|b_v|\le w_v$.
    Write $H\coloneqq \|\mathbf{w}-\1\|_2^2$ and $B\coloneqq \|\mathbf{b}\|_2^2$.
    Since $\mathcal{E}_G$ is homogeneous of degree three, we have $\mathcal{E}_G(\mathbf{x})=M^{-3}\mathcal{E}_G(\mathbf{q})$, while $\|\mathbf{z}\|_2^2=N^{-2}B$ and $\|\mathbf{y}-\mathbf{u}_N\|_2^2=N^{-2}H$.
    Using $M=2N$, multiplying Equation~\eqref{eq:cubic-expander} by $M^3/3$ shows that it is equivalent to
    \begin{equation}\label{eq:cubic-normalized}
        \mathcal{E}_G(\mathbf{q})-dM\ge\frac43dB-\frac{18}{5}dH.
    \end{equation}
    
    We first establish a coarse bound.
    Put $\mathbf{q}=\1+\mathbf{g}$, so $\mathbf{g}\perp\1$ and $g_i\ge-1$.
    The two coordinates belonging to each pair give
    \[
    \|\mathbf{g}\|_2^2
    =\sum_v\bigl((w_v+b_v-1)^2+(w_v-b_v-1)^2\bigr)
    =2(H+B).
    \]
    Expanding Equation~\eqref{eq:cubic-functional}, we obtain 
    \begin{equation}\label{eq:cubic-expand-g}
        \mathcal{E}_G(\mathbf{q})-dM
        =d\|\mathbf{g}\|_2^2+2\langle \mathbf{g},A(G)\mathbf{g}\rangle
        +\langle \mathbf{g}^{\square},A(G)\mathbf{g}\rangle.
    \end{equation}
    Every coordinate of $A(G)\mathbf{g}=A(G)\mathbf{q}-d\1$ is at least $-d$, and therefore the last term is at least $-d\|\mathbf{g}\|_2^2$.
    Since $\mathbf{g}\perp\1$, the spectral assumption gives
    \begin{equation}\label{eq:coarse-bound}
        \mathcal{E}_G(\mathbf{q})-dM\ge-2\rho\|\mathbf{g}\|_2^2=-4\rho(H+B).
    \end{equation}
    
    Suppose first that $H>\frac25B$.
    Then $B<\frac52H$, and Equation~\eqref{eq:coarse-bound} gives $\mathcal{E}_G(\mathbf{q})-dM\ge-\frac{14}{100}dH$.
    On the other hand, $\frac43B-\frac{18}{5}H<-\frac4{15}H$. 
    Thus Equation~\eqref{eq:cubic-normalized} holds in this case.
    
    It remains to consider $H\le\frac25B$.
    Since $|b_v|\le w_v$, $B\le\sum_vw_v^2=N+H$, and consequently $H\le\frac23N$.
    Call the pair $\{v^+,v^-\}$ \emph{heavy} if $w_v>10$, and let $S\coloneqq \{v\colon w_v>10\}$ be the set of indices of the heavy pairs and $W\coloneqq \sum_{v\in S}w_v$ their total weight.
    For $w\ge10$, we have $w\le\frac{10}{81}(w-1)^2$ and $w^2\le\frac{100}{81}(w-1)^2$. 
    Hence
    \begin{equation}\label{eq:heavy-bounds}
        W\le\frac{10}{81}H
        \quad\text{and}\quad
        \sum_{v\in S}b_v^2\le\frac{100}{81}H.
    \end{equation}
    
    Let $\mathbf{q}'$ be obtained from $\mathbf{q}$ by setting both coordinates of each heavy pair to zero.
    Since the coefficients of $\mathcal{E}_G$ are nonnegative, $\mathcal{E}_G(\mathbf{q})\ge \mathcal{E}_G(\mathbf{q}')$.
    The mean of $\mathbf{q}'$ is $\mu=1-\frac WN \ge1-\frac{10}{81}\cdot\frac23 =\frac{223}{243}$. 
    Write $\mathbf{q}'=\mu\1+\mathbf{h}$ with $\mathbf{h}\perp\1$.
    On a non-heavy pair we have $0\le q_{v^\pm}=w_v\pm b_v\le2w_v\le20$, while on a heavy pair both coordinates of $\mathbf{q}'$ are zero.
    Since $0\le\mu\le1$, it follows that $\|\mathbf{h}\|_\infty\le20$.
    Expanding as before,
    \begin{equation}\label{eq:cubic-expand-h}
        \mathcal{E}_G(\mathbf{q}')=dM\mu^3
        +\mu\bigl(d\|\mathbf{h}\|_2^2+2\langle \mathbf{h},A(G)\mathbf{h}\rangle\bigr)
        +\langle \mathbf{h}^{\square},A(G)\mathbf{h}\rangle.
    \end{equation}
    Since $\mathbf{h}\perp\1$, we have $J\mathbf{h}=0$ and hence
    \[
    \|A(G)\mathbf{h}\|_2
    =\left\|\left(A(G)-\frac dM J\right)\mathbf{h}\right\|_2
    \le\rho\|\mathbf{h}\|_2.
    \]
    Consequently, $|\langle \mathbf{h}^{\square},A(G)\mathbf{h}\rangle| \le\|\mathbf{h}^{\square}\|_2\|A(G)\mathbf{h}\|_2 \le20\rho\|\mathbf{h}\|_2^2$. 
    Combining this with $\langle\mathbf{h},A(G)\mathbf{h}\rangle\ge-\rho\|\mathbf{h}\|_2^2$, all terms after $dM\mu^3$ in Equation~\eqref{eq:cubic-expand-h} are at least
    \[
    \bigl(\mu(d-2\rho)-20\rho\bigr)\|\mathbf{h}\|_2^2
    \ge d\left(\frac{223}{243}\cdot\frac{49}{50}-\frac15\right)\|\mathbf{h}\|_2^2
    =\frac{8497}{12150}d\|\mathbf{h}\|_2^2
    >\frac23d\|\mathbf{h}\|_2^2.
    \]
    Moreover, $1-\mu^3\le3(1-\mu)=3W/N$, so $dM\mu^3-dM\ge-6dW\ge-\frac{60}{81}dH$. 
    
    Let $B_{\mathrm{good}}\coloneqq \sum_{v\notin S}b_v^2$.
    For every non-heavy pair, the two coordinates of $\mathbf{h}$ contribute at least $2b_v^2$, and therefore $\|\mathbf{h}\|_2^2 \ge2B_{\mathrm{good}} \ge2\left(B-\frac{100}{81}H\right)$. 
    Combining these estimates gives
    \[
    \mathcal{E}_G(\mathbf{q})-dM
    \ge\frac43dB
    -\left(\frac{60}{81}+\frac43\frac{100}{81}\right)dH
    =\frac43dB-\frac{580}{243}dH.
    \]
    Since $580/243<18/5$, this is at least $\frac43dB-\frac{18}{5}dH$.
    This proves Equation~\eqref{eq:cubic-normalized}, and rescaling proves Equation~\eqref{eq:cubic-expander}.
\end{proof}

%%%%%%%%%%%%%%%%%%%%%%%%%%%%%%%%%%%%%%%%%%%%%%%%%%%%
\subsection{Multiscale stability of the palette tower}
\label{sec:tower-stability}
%%%%%%%%%%%%%%%%%%%%%%%%%%%%%%%%%%%%%%%%%%%%%%%%%%%%

We now combine the preceding one-step estimates to prove stability for palettes obtained after arbitrarily many lifts.
At each step, the cubic expander inequality gives a positive penalty for imbalance within the newly formed lifted pairs, together with an error controlled by the deviation of the pushed-forward weighting from uniformity.
The matching bound contributes additional errors of the same quadratic type.
Since the number of colors doubles at every step, these error terms occur on geometrically decreasing scales.
After summing over the tower, they can be absorbed by the stability of the root palette and the positive lifted-pair penalties.

Fix the graph tower from Proposition~\ref{prop:lift-tower}.
Let $P_0=P(D_0)$.
By definition, $D_0$ is a distinguished repeated-color graph of $P_0$.
Inductively, suppose $P_i$ has color set $V(D_i)$ and $D_i$ is a distinguished repeated-color graph of $P_i$.
Form the spectral lift $\widetilde P_{i+1}\coloneqq L_{s_i}(P_i,D_i)$.
Delete from $\widetilde P_{i+1}$ the union of at most four pairwise pattern-disjoint admissible tripartite perfect matchings, each consisting entirely of triples of three pairwise distinct colors, and call the resulting palette $P_{i+1}$.
These deletions do not remove any distinguished repeated-color pattern, so the induction may continue.
Write $\operatorname{pr}_{i+1,i}\colon V(D_{i+1})\to V(D_i)$ for the natural projection; it is a homomorphism $P_{i+1}\to P_i$.
We call any sequence obtained by this recursion a \emph{palette tower}.

For $0\le i\le n$, let $\operatorname{pr}_{n,i}\colon V(D_n)\to V(D_i)$ be the composition of the appropriate $n-i$ projections, with $\operatorname{pr}_{n,n}$ being the identity map.
For a weighting $\mathbf{x}$ on $V(D_n)$, define $\mathbf{x}^{(i)}\coloneqq(\operatorname{pr}_{n,i})_*\mathbf{x}$ for $0 \le i \le n$, and put $\mathbf{h}_i\coloneqq \mathbf{x}^{(i)}-\mathbf{u}_{N_i}$. (Recall that $N_i=|V(D_i)|=2^iN_0$.)
For each $0\le i<n$ and $v\in V(D_i)$, define the lifted-pair difference vector $\boldsymbol{\Delta}_i$ by $\Delta_i(v)\coloneqq x^{(i+1)}_{v^+}-x^{(i+1)}_{v^-}$.
Then
\begin{equation}\label{eq:orthogonal-split}
    \|\mathbf{h}_{i+1}\|_2^2
    =\frac12\|\mathbf{h}_i\|_2^2+\frac12\|\boldsymbol{\Delta}_i\|_2^2.
\end{equation}

The following theorem sums the one-step estimates over the entire tower and gives a uniform quadratic penalty for every departure from the uniform weighting, independently of the permitted matching deletions.

\begin{theorem}\label{thm:tower-stability}
    Let $d$ and $\rho$ be as in Proposition~\ref{prop:lift-tower}, let $\kappa_d$ be as in Lemma~\ref{lem:root-stability}, and put $B_d\coloneqq 27/20+12/d$.
    Suppose that
    \begin{equation}\label{eq:N0-condition}
        N_0\ge \max\left\{N_{\mathrm{root}}(d), ~\tfrac{8B_d\,d}{3\kappa_d} \right\}.
    \end{equation}
    Then every palette tower $(P_i)_{i=0}^\infty$ defined above satisfies, for every $n\ge0$ and every weighting $\mathbf{x}$ on $V(D_n)$,
    \begin{equation}\label{eq:tower-stability}
        \Lambda_{P_n}(\mathbf{u}_{N_n})-\Lambda_{P_n}(\mathbf{x})
        \ge\frac{\kappa_d}{2}\|\mathbf{h}_0\|_2^2
        +\frac1{40}\sum_{i=0}^{n-1}\frac d{N_i}\|\boldsymbol{\Delta}_i\|_2^2.
    \end{equation}
    In particular, $\mathbf{u}_{N_n}$ is the unique maximizer of $\Lambda_{P_n}$ on $\Delta_{V(D_n)}$, and hence $\lambda(P_n)=\Lambda_{P_n}(\mathbf{u}_{N_n})$.
\end{theorem}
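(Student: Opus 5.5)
The plan is to telescope along the tower. Write $\delta_i\coloneqq\Lambda_{P_i}(\mathbf{u}_{N_i})-\Lambda_{P_i}(\mathbf{x}^{(i)})$, so the goal is a lower bound on $\delta_n$. The bound is assembled from three ingredients: the root base case, a one-step recursion coming from Lemma~\ref{lem:exact-lift}, and a summation in which the error terms are absorbed.

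\emph{Base case and one-step recursion.} For the base case, Lemma~\ref{lem:root-stability} applies because $N_0\ge N_{\mathrm{root}}(d)$, and gives $\delta_0\ge\kappa_d\|\mathbf{h}_0\|_2^2$. For the step from level $i$ to level $i+1$, let $k_{i+1}\le4$ be the number of matchings $\cM$ deleted at that level. Lemma~\ref{lem:exact-lift} together with the definition of $R_\cM$ gives
\[
\Lambda_{P_{i+1}}(\mathbf{x}^{(i+1)})=\Lambda_{P_i}(\mathbf{x}^{(i)})-3\mathcal{E}_{D_{i+1}^-}(\mathbf{x}^{(i+1)})-\sum_{\cM}R_\cM(\mathbf{x}^{(i+1)}),
\]
using $\operatorname{pr}_*\mathbf{x}^{(i+1)}=\mathbf{x}^{(i)}$. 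At the uniform weighting the same identity holds with losses $3d/(4N_i^2)$ and $k_{i+1}/N_{i+1}^2$. Subtracting the two identities gives
\[
\delta_{i+1}=\delta_i+\Bigl[3\mathcal{E}_{D_{i+1}^-}(\mathbf{x}^{(i+1)})-\tfrac{3d}{(2N_i)^2}\Bigr]+\sum_\cM\Bigl[R_\cM(\mathbf{x}^{(i+1)})-\tfrac1{N_{i+1}^2}\Bigr].
\]
I will bound the two brackets separately.

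\emph{The two brackets.} For the first bracket I apply Lemma~\ref{lem:cubic-expander} with $G=D_{i+1}^-$ and the pairing $v^\pm$. The spectral hypothesis is exactly Proposition~\ref{prop:lift-tower}(iii) combined with the remark after it. In the lemma's notation $\mathbf{y}=\mathbf{x}^{(i)}$ and $\mathbf{z}=\boldsymbol{\Delta}_i$, so the bracket is at least
\[
\tfrac12\tfrac d{N_i}\|\boldsymbol{\Delta}_i\|_2^2-\tfrac{27}{20}\tfrac d{N_i}\|\mathbf{h}_i\|_2^2.
\]
For the second, Lemma~\ref{lem:matching-bound} shows the matching sum is at least $-\tfrac{12}{d}\cdot\tfrac d{N_{i+1}}\|\mathbf{h}_{i+1}\|_2^2$. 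The bookkeeping choice is to charge this matching error to level $i+1$ rather than expanding it. Summing the recursion then gives
\[
\delta_n\ge\kappa_d\|\mathbf{h}_0\|_2^2+\tfrac12\sum_{i<n}\tfrac d{N_i}\|\boldsymbol{\Delta}_i\|_2^2-B_d\sum_{i=0}^n\tfrac d{N_i}\|\mathbf{h}_i\|_2^2,
\]
with $B_d=27/20+12/d$ as in the statement.

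\emph{Absorbing the error sum.} This is the main obstacle, because the constants must close. Iterating \eqref{eq:orthogonal-split} gives
\[
\|\mathbf{h}_i\|_2^2=2^{-i}\|\mathbf{h}_0\|_2^2+\sum_{j<i}2^{-(i-j)}\|\boldsymbol{\Delta}_j\|_2^2.
\]
Substituting this with $N_i=2^iN_0$, the coefficient of $\|\mathbf{h}_0\|_2^2$ in the error sum is at most $B_dd\sum_i4^{-i}/N_0=\tfrac{4B_dd}{3N_0}$. By \eqref{eq:N0-condition} this is at most $\kappa_d/2$. The coefficient of $\|\boldsymbol{\Delta}_j\|_2^2$ is
\[
\sum_{i>j}\frac{B_dd}{N_02^i}2^{-(i-j)}=\frac{B_d}{3}\frac d{N_j}.
\]
So the net coefficient of $\tfrac{d}{N_j}\|\boldsymbol{\Delta}_j\|_2^2$ is $\tfrac12-\tfrac9{20}-\tfrac4d=\tfrac1{20}-\tfrac4d\ge\tfrac1{40}$, since $d\ge160$. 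This yields \eqref{eq:tower-stability}.

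\emph{Uniqueness.} If the right-hand side of \eqref{eq:tower-stability} vanishes, then $\mathbf{h}_0=0$ and every $\boldsymbol{\Delta}_i=0$. By \eqref{eq:orthogonal-split} this forces $\mathbf{h}_n=0$, that is, $\mathbf{x}=\mathbf{u}_{N_n}$. Hence $\mathbf{u}_{N_n}$ is the unique maximizer and $\lambda(P_n)=\Lambda_{P_n}(\mathbf{u}_{N_n})$.
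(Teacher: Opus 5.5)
Your proposal is correct and follows essentially the same route as the paper: the telescoping identity obtained from Lemma~\ref{lem:exact-lift} and the matching deletions, the root, cubic-expander and matching bounds merged into the single error constant $B_d$, and absorption of $B_d\sum_{i}\frac{d}{N_i}\|\mathbf{h}_i\|_2^2$ via the unrolled orthogonal split. One cosmetic point: the coefficient of $\|\boldsymbol{\Delta}_j\|_2^2$ is a finite sum over $j<i\le n$, so it is at most $\frac{B_d}{3}\frac{d}{N_j}$ rather than equal to it, which is the direction you need.
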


\begin{proof}
    For $i\ge1$, let $\cL_i$ be the collection of tripartite perfect matchings deleted when $P_i$ is formed, and put $\cL_0=\varnothing$.
    Thus $|\cL_i|\le4$.
    Since the matchings in each $\cL_i$ are pairwise pattern-disjoint, their contributions to the palette polynomial add without multiplicity.
    More precisely, for $0\le i<n$,
    \[
    \Lambda_{P_{i+1}}(\mathbf{x}^{(i+1)})
    =\Lambda_{P_i}(\mathbf{x}^{(i)})-3\mathcal{E}_{D_{i+1}^-}(\mathbf{x}^{(i+1)})
    -\sum_{\cM\in\cL_{i+1}}R_{\cM}(\mathbf{x}^{(i+1)}).
    \]
    At $\mathbf{u}_{N_{i+1}}$, the corresponding losses are $3d/N_{i+1}^2$ and $N_{i+1}^{-2}$ per matching in $\cL_{i+1}$, respectively.
    Subtracting the two recursions and summing over $i$ gives the exact telescoping identity
    \begin{align}
        & \Lambda_{P_n}(\mathbf{u}_{N_n})-\Lambda_{P_n}(\mathbf{x}) \notag \\
        & = \Lambda_{P_0}(\mathbf{u}_{N_0})-\Lambda_{P_0}(\mathbf{x}^{(0)})
        +\sum_{i=0}^{n-1}
        \left(3\mathcal{E}_{D_{i+1}^-}(\mathbf{x}^{(i+1)})-\frac{3d}{N_{i+1}^2}\right)
        +\sum_{i=1}^{n}\sum_{\cM\in\cL_i}
        \left(R_{\cM}(\mathbf{x}^{(i)})-\frac1{N_i^2}\right).\label{eq:telescoping}
    \end{align}
    The root term $\Lambda_{P_0}(\mathbf{u}_{N_0})-\Lambda_{P_0}(\mathbf{x}^{(0)})$ is bounded below by $\kappa_d\|\mathbf{h}_0\|_2^2$ by Lemma~\ref{lem:root-stability}.
    For each $0\le i<n$, applying Lemma~\ref{lem:cubic-expander} to $D_{i+1}^-$, with the lifted pairs $\{v^+,v^-\}$ as the prescribed pairing, gives
    \[
    3\mathcal{E}_{D_{i+1}^-}(\mathbf{x}^{(i+1)})-\frac{3d}{N_{i+1}^2}
    \ge\frac12\frac d{N_i}\|\boldsymbol{\Delta}_i\|_2^2
    -\frac{27}{20}\frac d{N_i}\|\mathbf{h}_i\|_2^2.
    \]
    For each $1\le i\le n$, Lemma~\ref{lem:matching-bound} shows that the at most four matching deletions at level $i$ contribute at least $-\frac{12}{N_i}\|\mathbf{h}_i\|_2^2 = -\frac{12}{d}\frac d{N_i}\|\mathbf{h}_i\|_2^2$.
    Thus, for each interior index $1\le i\le n-1$, the error term $-\frac{27}{20}\frac d{N_i}\|\mathbf{h}_i\|_2^2$ from the cubic expander estimate and the error term $-\frac{12}{N_i}\|\mathbf{h}_i\|_2^2$ from the matching-deletion estimate have total contribution
    \[
    -\frac{27}{20}\frac d{N_i}\|\mathbf{h}_i\|_2^2
    -\frac{12}{N_i}\|\mathbf{h}_i\|_2^2
    =-B_d\frac d{N_i}\|\mathbf{h}_i\|_2^2,
    \quad\text{with}\quad B_d\coloneqq\frac{27}{20}+\frac{12}{d}.
    \]
    At $i=0$ there is no matching-deletion error, while at $i=n$ there is no cubic-expander error, so the same constant $B_d$ also bounds the negative error at each endpoint.
    We obtain
    \begin{equation}\label{eq:tower-pre-sum}
        \Lambda_{P_n}(\mathbf{u}_{N_n})-\Lambda_{P_n}(\mathbf{x})
        \ge\kappa_d\|\mathbf{h}_0\|_2^2
        +\frac12\sum_{i=0}^{n-1}\frac d{N_i}\|\boldsymbol{\Delta}_i\|_2^2
        -B_d\sum_{i=0}^{n}\frac d{N_i}\|\mathbf{h}_i\|_2^2.
    \end{equation}
    
    Unrolling Equation~\eqref{eq:orthogonal-split} gives $\|\mathbf{h}_i\|_2^2 =2^{-i}\|\mathbf{h}_0\|_2^2 +\sum_{j=0}^{i-1}2^{-(i-j)}\|\boldsymbol{\Delta}_j\|_2^2$. 
    Since $N_i=2^iN_0$, the coefficient of $\|\mathbf{h}_0\|_2^2$ in the last sum in Equation~\eqref{eq:tower-pre-sum} is bounded by
    \[
    \sum_{i=0}^{n}\frac d{N_i}2^{-i}
    \le\frac d{N_0}\sum_{i=0}^{\infty}4^{-i}
    =\frac{4d}{3N_0}.
    \]
    For each fixed $j<n$, the coefficient of $\|\boldsymbol{\Delta}_j\|_2^2$ in the last sum in Equation~\eqref{eq:tower-pre-sum} is bounded by
    \[
    \sum_{i=j+1}^{n}\frac d{N_i}2^{-(i-j)}
    \le\frac d{N_j}\sum_{t=1}^{\infty}4^{-t}
    =\frac13\frac d{N_j}.
    \]
    Consequently,
    \begin{equation}\label{eq:multiscale-sum}
        \sum_{i=0}^{n}\frac d{N_i}\|\mathbf{h}_i\|_2^2
        \le\frac{4d}{3N_0}\|\mathbf{h}_0\|_2^2
        +\frac13\sum_{i=0}^{n-1}\frac d{N_i}\|\boldsymbol{\Delta}_i\|_2^2.
    \end{equation}
    Substituting this into Equation~\eqref{eq:tower-pre-sum}, we obtain 
    \begin{align*}
        \Lambda_{P_n}(\mathbf{u}_{N_n})-\Lambda_{P_n}(\mathbf{x})
        \ge\left(\kappa_d-\frac{4B_d\,d}{3N_0}\right)\|\mathbf{h}_0\|_2^2
        +\left(\frac12-\frac{B_d}{3}\right)
        \sum_{i=0}^{n-1}\frac d{N_i}\|\boldsymbol{\Delta}_i\|_2^2.
    \end{align*}
    By Equation~\eqref{eq:N0-condition}, the first coefficient is at least $\kappa_d/2$.
    Since $d\ge160$, we have $\frac12-\frac{B_d}{3} =\frac1{20}-\frac4d \ge\frac1{40}$.
    This proves Equation~\eqref{eq:tower-stability}.
    If $\Lambda_{P_n}(\mathbf{u}_{N_n})=\Lambda_{P_n}(\mathbf{x})$, then Equation~\eqref{eq:tower-stability} forces $\mathbf{h}_0=\mathbf{0}$ and $\boldsymbol{\Delta}_i=\mathbf{0}$ for every $i<n$.
    Equation~\eqref{eq:orthogonal-split} then gives $\mathbf{h}_n=\mathbf{0}$, so $\mathbf{x}=\mathbf{x}^{(n)}=\mathbf{u}_{N_n}$.
    Thus the uniform weighting is the unique maximizer.
\end{proof}

%%%%%%%%%%%%%%%%%%%%%%%%%%%%%%%%%%%%%%%%%%%%%%%%%%%%
\section{The interval construction}
\label{sec:interval-construction}
%%%%%%%%%%%%%%%%%%%%%%%%%%%%%%%%%%%%%%%%%%%%%%%%%%%%

Theorem~\ref{thm:tower-stability} shows that the uniform weighting remains the unique maximizer throughout the palette tower, even when up to four suitable tripartite perfect matchings are deleted at each step.
We now choose these matchings recursively and use their deletion costs to produce an interval of limiting palette Lagrangians.
At each positive level $i$, deleting $k_i\in\{0,1,2,3,4\}$ designated matchings decreases the Lagrangian by exactly $k_i/N_i^2$.
Since $N_i=2^iN_0$, these costs are proportional to $4^{-i}$, and the choices of the integers $k_i$ give a redundant base-$4$ expansion.
This first produces a non-degenerate interval $[a,b]\subseteq\PiU$.
We then apply complete joins, which send a density $x$ to $1-(1-x)/M^2$ while preserving the homomorphisms between successive palettes.
For all sufficiently large consecutive values of $M$, the images of $[a,b]$ overlap and fill an interval ending at $1$.

We first fix the root size used throughout this section.
Proposition~\ref{prop:lift-tower} allows $N_\star$ to be chosen arbitrarily large, so we choose it such that the resulting $N_0$ satisfies Equation~\eqref{eq:N0-condition} and
\begin{equation}\label{eq:N0-later}
    N_0\ge7
    \quad\text{and}\quad
    N_0>\frac{2d+2}{3}.
\end{equation}
The condition $N_0\ge7$ ensures that the initial matchings constructed below use three pairwise distinct colors in every triple.
The second inequality will imply that the upper endpoint $b$ of the initial interval is strictly smaller than $1$.

\subsection{Persistent digit matchings}
\label{sec:digit-matchings}

It is not enough to find four deletable matchings at a single level: after making a choice there, we must still have four new matchings available at every later level.
We achieve this by starting with four matchings at the root and assigning each one a continuation reservoir.
Whenever the colors are doubled, the lifted copies of a continuation reservoir split into four new tripartite perfect matchings.
One of them becomes available for deletion at the new level, while another is retained to continue the construction.

Relabel the vertices of $D_0$ by $\mathbb Z/N_0\mathbb Z$.
For $j\in[4]$, define
\begin{equation}\label{eq:root-reservoirs}
    R_{0,j}\coloneqq \{(t,t+1,t+j+1)\colon t\in\mathbb Z/N_0\mathbb Z\}.
\end{equation}
For each fixed $j$, the three coordinate maps in Equation~\eqref{eq:root-reservoirs} are translations of $\mathbb Z/N_0\mathbb Z$ and hence are bijections, so $R_{0,j}$ is a tripartite perfect matching.
The three entries of each triple have pairwise differences $1$, $j$, and $j+1$, which are all nonzero modulo $N_0$ because $j\in[4]$ and $N_0\ge7$.
The matchings $R_{0,1},\ldots,R_{0,4}$ are pairwise disjoint: if a triple belonged to both $R_{0,j}$ and $R_{0,j'}$, then its first and third coordinates would imply $j=j'$.
Thus they are four pairwise disjoint matchings consisting entirely of triples of distinct colors, and all their triples are admissible in the root palette $P_0$.

Now suppose that $R_{i,j}$ has been retained as a continuation reservoir at level $i$.
Its triples have pairwise distinct colors and are admissible in $P_i$.
They are therefore unaffected by the repeated-color deletion in the next spectral palette lift.
For each $\alpha,\beta\in\{+1,-1\}$, define
\begin{equation}\label{eq:lifted-matchings}
    R_{i,j}^{\alpha,\beta}\coloneqq \{(a^\sigma,b^{\alpha\sigma},c^{\beta\sigma})\colon
    (a,b,c)\in R_{i,j},\ \sigma\in\{+1,-1\}\},
\end{equation}
which contains $2N_i=N_{i+1}$ triples.
Each coordinate map is a bijection onto the child color set: the corresponding coordinate map of $R_{i,j}$ is a bijection onto the parent color set, and multiplication by the fixed sign $\alpha$ or $\beta$ permutes $\{+1,-1\}$.
Hence every $R_{i,j}^{\alpha,\beta}$ is a tripartite perfect matching.
Furthermore, a sign triple $(\sigma_1,\sigma_2,\sigma_3)$ determines the parameters uniquely by $(\sigma, \alpha, \beta) = (\sigma_1, \sigma_2\sigma_1, \sigma_3\sigma_1)$. 
Consequently, the four matchings in Equation~\eqref{eq:lifted-matchings} are pairwise disjoint and together contain all lifted copies of the triples in $R_{i,j}$.
Matchings arising from different values of $j$ are also disjoint, since their projections to the parent color set lie in the disjoint reservoirs $R_{i,j}$.

For each $j\in[4]$, at level $i+1$ designate $M_{i+1,j}\coloneqq R_{i,j}^{+,+}$ and $R_{i+1,j}\coloneqq R_{i,j}^{+,-}$. 
We call $M_{i+1,j}$ the $j$th \emph{digit matching} at level $i+1$ and retain $R_{i+1,j}$ as the next \emph{continuation reservoir}.
The other two lifted matchings remain admissible but play no further role.
This recursive choice provides four fresh, pairwise disjoint digit matchings at every positive level.
Since the continuation reservoirs are never deleted, the four matchings available at a given level do not depend on any earlier deletion choices.

For a \emph{digit sequence} $\mathbf k=(k_1,k_2,\ldots)\in\{0,1,2,3,4\}^{\N}$, define a chain $P_n(\mathbf k)$ as follows.
Starting with $P_0(\mathbf k)=P_0$, construct $P_i(\mathbf k)$ from $P_{i-1}(\mathbf k)$ by performing the spectral palette lift and then deleting precisely $M_{i,1},\ldots,M_{i,k_i}$. 
If $k_i=0$, no digit matching is deleted at level $i$.
The continuation reservoirs are never deleted.
The deleted matchings satisfy the hypotheses of Theorem~\ref{thm:tower-stability}, and the natural projection gives a homomorphism $P_{n+1}(\mathbf k)\to P_n(\mathbf k)$ for every $n$.

The stability theorem now makes the Lagrangian of each finite palette in this chain an exact bookkeeping of the losses incurred at the successive steps.

\begin{proposition}\label{prop:branch-values}
    Let $L_0\coloneqq \lambda(P_0)=1-3/N_0+(2+3d)/N_0^2$.
    For every digit sequence $\mathbf k$,
    \begin{equation}\label{eq:finite-branch-value}
        \lambda(P_n(\mathbf k))
        =L_0-\sum_{i=0}^{n-1}\frac{3d}{4N_i^2}
        -\sum_{i=1}^{n}\frac{k_i}{N_i^2}.
    \end{equation}
    Consequently,
    \begin{equation}\label{eq:branch-limit}
        \lim_{n\to\infty}\lambda(P_n(\mathbf k))
        =L_0-\frac d{N_0^2}
        -\frac1{N_0^2}\sum_{i=1}^{\infty}k_i4^{-i}.
    \end{equation}
\end{proposition}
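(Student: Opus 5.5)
The plan is to combine Theorem~\ref{thm:tower-stability}, which identifies the Lagrangian of every palette in the tower with its value at the uniform weighting, with the exact one-step bookkeeping from Lemma~\ref{lem:exact-lift} and the matching-loss computation $R_{\cM}(\mathbf{u}_N)=1/N^2$.

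First, the value of $L_0$. By Equation~\eqref{eq:root-uniform}, $\Lambda_{P_0}(\mathbf{u}_{N_0})=1-3/N_0+(2+3d)/N_0^2$. Since $N_0\ge N_{\mathrm{root}}(d)$, Lemma~\ref{lem:root-stability} (or the case $n=0$ of Theorem~\ref{thm:tower-stability}) shows that $\mathbf{u}_{N_0}$ is the maximizer, so this is $\lambda(P_0)$.

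Next, the recursion at the uniform weighting. For each $i\ge0$, put $\mathbf{x}=\mathbf{u}_{N_{i+1}}$. Its push-forward under $\operatorname{pr}_{i+1,i}$ is $\mathbf{u}_{N_i}$, since each parent color receives the weight $2/N_{i+1}=1/N_i$.
\begin{itemize}
\item Lemma~\ref{lem:exact-lift}, applied with $D=D_i$ (which is $d$-regular on $N_i$ vertices), gives $\Lambda_{\widetilde P_{i+1}}(\mathbf{u}_{N_{i+1}})=\Lambda_{P_i(\mathbf k)}(\mathbf{u}_{N_i})-3d/(4N_i^2)$ by Equation~\eqref{eq:lift-uniform-loss}.
\item The digit matchings $M_{i+1,1},\ldots,M_{i+1,k_{i+1}}$ are pairwise disjoint, consist of distinct-color triples, and are admissible in $\widetilde P_{i+1}$. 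Admissibility holds because $M_{i+1,j}$ consists of lifts of triples of the reservoir $R_{i,j}$, which are admissible in $P_i(\mathbf k)$: reservoirs are never deleted and their triples are not repeated-color. So each deletion removes exactly $N_{i+1}$ admissible patterns and lowers the polynomial at the uniform weighting by $R_{\cM}(\mathbf{u}_{N_{i+1}})=1/N_{i+1}^2$.
\end{itemize}
Hence
\[
\Lambda_{P_{i+1}(\mathbf k)}(\mathbf{u}_{N_{i+1}})=\Lambda_{P_i(\mathbf k)}(\mathbf{u}_{N_i})-\frac{3d}{4N_i^2}-\frac{k_{i+1}}{N_{i+1}^2}.
\]
Telescoping from $0$ to $n$ gives the right-hand side of Equation~\eqref{eq:finite-branch-value}.

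Then, identifying this value with $\lambda(P_n(\mathbf k))$. The chain $(P_n(\mathbf k))$ is a palette tower in the sense of Section~\ref{sec:tower-stability}: at each level at most four pairwise pattern-disjoint admissible distinct-color tripartite perfect matchings are deleted, and $N_0$ satisfies Equation~\eqref{eq:N0-condition}. Theorem~\ref{thm:tower-stability} therefore gives $\lambda(P_n(\mathbf k))=\Lambda_{P_n(\mathbf k)}(\mathbf{u}_{N_n})$, which proves Equation~\eqref{eq:finite-branch-value}.

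Finally, the limit. Since $N_i=2^iN_0$,
\[
\sum_{i\ge0}\frac{3d}{4N_i^2}=\frac{3d}{4N_0^2}\sum_{i\ge0}4^{-i}=\frac{3d}{4N_0^2}\cdot\frac43=\frac{d}{N_0^2},
\]
and $\sum_{i\ge1}k_i/N_i^2=N_0^{-2}\sum_{i\ge1}k_i4^{-i}$. Both series converge absolutely because $k_i\le4$, which yields Equation~\eqref{eq:branch-limit}.

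The main point needing care is the admissibility and disjointness bookkeeping in the recursion step, namely that the designated digit matchings really lie in $\widetilde P_{i+1}$ and are untouched by earlier deletions. This was already argued in Section~\ref{sec:digit-matchings}, so the proof reduces to citing it. All the real difficulty is carried by Theorem~\ref{thm:tower-stability}.
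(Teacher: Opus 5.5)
Your proposal is correct and follows essentially the same route as the paper: Theorem~\ref{thm:tower-stability} identifies $\lambda(P_n(\mathbf k))$ with the value at the uniform weighting, and the per-level losses $3d/(4N_{i-1}^2)$ from Equation~\eqref{eq:lift-uniform-loss} and $k_i/N_i^2$ from the digit matchings telescope, followed by the geometric sum. The paper also records that each one-step difference is strictly negative, so the chain is strictly decreasing; this is not part of the statement, but Theorem~\ref{thm:base-interval} relies on it later.
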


\begin{proof}
    Theorem~\ref{thm:tower-stability} gives $\lambda(P_n(\mathbf k)) = \Lambda_{P_n(\mathbf k)}(\mathbf{u}_{N_n})$ at every finite level.
    When we pass from level $i-1$ to level $i$, Equation~\eqref{eq:lift-uniform-loss} gives a loss of $3d/(4N_{i-1}^2)$ from the spectral palette lift, and the $k_i$ digit matchings deleted at that step contribute a further loss of $k_i/N_i^2$.
    Starting with $\lambda(P_0)=L_0$ and summing these losses proves Equation~\eqref{eq:finite-branch-value}.
    In particular,
    \[
    \lambda(P_i(\mathbf k))-\lambda(P_{i-1}(\mathbf k))
    =-\frac{3d}{4N_{i-1}^2}-\frac{k_i}{N_i^2}<0,
    \]
    so the Lagrangians along the chain are strictly decreasing.
    Finally, $N_i=2^iN_0$, and hence
    \[
    \sum_{i=0}^{\infty}\frac{3d}{4N_i^2}
    =\frac{3d}{4N_0^2}\sum_{i=0}^{\infty}4^{-i}
    =\frac d{N_0^2}.
    \]
    Taking the limit in Equation~\eqref{eq:finite-branch-value} gives Equation~\eqref{eq:branch-limit}.
\end{proof}

It remains to determine the set of values represented by the digit series in Equation~\eqref{eq:branch-limit}.
The extra digit $4$ makes the five intervals corresponding to the possible first digits overlap consecutively, which is why the series fills an interval rather than a disconnected set.

\begin{lemma}\label{lem:interval-coding}
    The set of possible digit sums is
    \[
    \left\{\sum_{i=1}^{\infty}k_i4^{-i}\colon
    k_i\in\{0,1,2,3,4\}\right\}
    =\left[0,\frac43\right].
    \]
\end{lemma}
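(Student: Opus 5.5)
We prove the two inclusions separately. The inclusion ``$\subseteq$'' is immediate: every term satisfies $0\le k_i4^{-i}\le 4\cdot4^{-i}$, so every digit sum lies between $0$ and $4\sum_{i\ge1}4^{-i}=\frac43$. The series converges absolutely, so each sum is a well-defined real number in $\left[0,\frac43\right]$.

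For the reverse inclusion, fix $s\in\left[0,\frac43\right]$ and construct the digits greedily. We maintain the invariant that the remainder $r_n\coloneqq 4^n\bigl(s-\sum_{i=1}^{n}k_i4^{-i}\bigr)$ lies in $\left[0,\frac43\right]$. Initially $r_0=s$. Given $r_{n-1}\in\left[0,\frac43\right]$, we set
\[
k_n\coloneqq \min\{\lfloor 4r_{n-1}\rfloor,4\}\in\{0,1,2,3,4\}.
\]
The new remainder is $r_n=4r_{n-1}-k_n$.

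We check the invariant in two cases. If $4r_{n-1}<4$, then $r_n=4r_{n-1}-\lfloor4r_{n-1}\rfloor\in[0,1)\subseteq\left[0,\frac43\right]$. If $4r_{n-1}\ge4$, then $k_n=4$ and $r_n=4r_{n-1}-4\in\left[0,\frac{16}{3}-4\right]=\left[0,\frac43\right]$. This is exactly where the redundant digit $4$ is used: the five intervals $\frac{k+[0,4/3]}{4}$ for $k=0,\dots,4$ overlap consecutively and cover $\left[0,\frac43\right]$.

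Finally, $0\le s-\sum_{i\le n}k_i4^{-i}=4^{-n}r_n\le\frac43 4^{-n}\to0$, so $s=\sum_{i\ge1}k_i4^{-i}$. The only real point is the closure of the invariant at the top endpoint, which the case analysis above handles.
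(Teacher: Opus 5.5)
Your proof is correct and is essentially the paper's argument: both keep the rescaled remainder $4^n\bigl(s-\sum_{i\le n}k_i4^{-i}\bigr)$ inside $\left[0,\frac43\right]$ at every step and then let $4^{-n}\cdot\frac43\to0$. The only difference is that the paper takes any digit supplied by the covering $\left[0,\frac43\right]=\bigcup_{k=0}^4\frac{k+[0,4/3]}{4}$, whereas you fix the explicit greedy choice $k_n=\min\{\lfloor 4r_{n-1}\rfloor,4\}$ and verify the invariant directly.
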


\begin{proof}
    Since $0\le k_i\le4$, every digit sum lies in $[0,4/3]$.
    For the reverse inclusion, let $I=[0,4/3]$.
    The five intervals
    \[
    \frac{k+I}{4}
    =\left[\frac k4,\frac k4+\frac13\right],
    \qquad k=0,1,2,3,4,
    \]
    cover $I$ because each interval (except the first one) begins before the preceding one ends.
    Given $x_0\in I$, we may therefore choose $k_1\in\{0,\ldots,4\}$ and $x_1\in I$ such that $x_0=(k_1+x_1)/4$.
    Repeating this choice gives $k_i\in\{0,\ldots,4\}$ and $x_i\in I$ satisfying $x_{i-1}=(k_i+x_i)/4$ for every $i\ge1$.
    Iterating these identities yields
    \[
    x_0=\sum_{i=1}^{m}k_i4^{-i}+4^{-m}x_m.
    \]
    Since $0\le x_m\le4/3$, the final term tends to zero as $m\to\infty$, proving that $x_0$ has the required digit expansion.
\end{proof}

Combining the exact branch limits with the digit-coding lemma produces the initial interval of uniform Tur\'an densities.

\begin{theorem}\label{thm:base-interval}
    With $d$ and $N_0$ chosen as above, define
    \begin{equation}\label{eq:base-interval}
        a\coloneqq L_0-\frac{d+\frac{4}{3}}{N_0^2}
        \quad\text{and}\quad
        b\coloneqq L_0-\frac d{N_0^2}.
    \end{equation}
    Then $0<a<b<1$, and $[a,b]\subseteq\PiU$. 
\end{theorem}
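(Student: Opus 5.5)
The plan is to combine Proposition~\ref{prop:branch-values}, Lemma~\ref{lem:interval-coding} and Proposition~\ref{prop:projective-chain}, after first checking the numerical inequalities $0<a<b<1$.

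\emph{The inequalities.} We have $b-a=\frac{4}{3N_0^2}>0$. For the upper bound, $b=1-\frac3{N_0}+\frac{2+3d-d}{N_0^2}=1-\frac{3N_0-2d-2}{N_0^2}$. This is strictly less than $1$ exactly when $N_0>\frac{2d+2}{3}$, which is the second condition in Equation~\eqref{eq:N0-later}. For positivity, $a=1-\frac3{N_0}+\frac{2+2d-4/3}{N_0^2}\ge 1-\frac{3}{N_0}$. Since $N_0\ge N_{\mathrm{root}}(d)=6\theta^{-3}$, which is far larger than $3$, we get $a>0$. (Alternatively, $a$ is a limit of Lagrangians of palettes containing all triples of three distinct colors, hence at least $1-3/N_0$ or so.)

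\emph{Each point of $[a,b]$ is a branch limit.} Take $x\in[a,b]$ and set $s\coloneqq N_0^2(b-x)$, so that $s\in[0,4/3]$. Lemma~\ref{lem:interval-coding} gives a digit sequence $\mathbf k\in\{0,1,2,3,4\}^{\N}$ with $\sum_i k_i4^{-i}=s$. By Equation~\eqref{eq:branch-limit}, the limit $\lim_n\lambda(P_n(\mathbf k))$ equals $L_0-\frac d{N_0^2}-\frac{s}{N_0^2}=b-(b-x)=x$.

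\emph{Applying the projective-chain principle.} The proof of Proposition~\ref{prop:branch-values} shows that the Lagrangians $\lambda(P_n(\mathbf k))$ are strictly decreasing, because each step loses at least $3d/(4N_{n-1}^2)>0$. Hence $\lambda(P_n(\mathbf k))\downarrow x$. The natural projections are homomorphisms $P_{n+1}(\mathbf k)\to P_n(\mathbf k)$, as recorded before Proposition~\ref{prop:branch-values}. To match the indexing of Proposition~\ref{prop:projective-chain}, which starts at $P_1$, we reindex by setting $P'_m\coloneqq P_{m-1}(\mathbf k)$. Proposition~\ref{prop:projective-chain} then gives $x\in\PiU$.

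There is no real obstacle here: all the hard work sits in Theorem~\ref{thm:tower-stability}, which is what makes the branch values exact. The only points needing care are the strictness of the decrease, which the projective-chain proposition requires, and the positivity bound on $a$, both of which follow as above.
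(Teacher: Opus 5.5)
Your proof is correct and follows the paper's argument step for step. It makes the same numerical checks ($b-a=\frac{4}{3N_0^2}$, $b<1$ from $N_0>\frac{2d+2}{3}$, and $a>1-\frac3{N_0}>0$, where the paper cites $N_0\ge7$ and you cite $N_0\ge N_{\mathrm{root}}(d)$, which is immaterial), expands $N_0^2(b-x)$ via Lemma~\ref{lem:interval-coding}, uses the strict decrease from Proposition~\ref{prop:branch-values}, and reindexes before applying Proposition~\ref{prop:projective-chain}.

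One caution: drop your parenthetical alternative for $a>0$. For $n\ge1$ the palettes $P_n(\mathbf k)$ do not contain all triples of distinct colors, since digit matchings are deleted and lifts such as $(u^+,u^-,v^+)$ of inadmissible repeated-color patterns are absent. That remark is therefore false as stated, although your main computation does not depend on it.
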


\begin{proof}
    Substituting the value of $L_0$ from Equation~\eqref{eq:root-uniform} gives
    \[
    a=1-\frac3{N_0}+\frac{2d+\frac{2}{3}}{N_0^2}
    \quad\text{and}\quad
    b=1-\frac3{N_0}+\frac{2d+2}{N_0^2}.
    \]
    Since $N_0\ge7$, we have $a>1-3/N_0>0$.
    Moreover, $b-a=4/(3N_0^2)>0$, while the inequality $N_0>(2d+2)/3$ in Equation~\eqref{eq:N0-later} gives $b<1$.
    Thus $0<a<b<1$.
    
    Now let $x\in[a,b]$.
    By Lemma~\ref{lem:interval-coding}, we may choose a digit sequence $\mathbf k$ for which the limit in Equation~\eqref{eq:branch-limit} equals $x$.
    Proposition~\ref{prop:branch-values} shows that the corresponding Lagrangians $\lambda(P_n(\mathbf k))$ decrease strictly to $x$, and the natural projections make these palettes a projective chain.
    After reindexing the chain from $1$, Proposition~\ref{prop:projective-chain} gives $x\in\PiU$.
    Since $x$ was arbitrary, $[a,b]\subseteq\PiU$.
\end{proof}

%%%%%%%%%%%%%%%%%%%%%%%%%%%%%%%%%%%%%%%%%%%%%%%%%%%%
\subsection{Complete joins and the terminal interval}
\label{sec:complete-join}
%%%%%%%%%%%%%%%%%%%%%%%%%%%%%%%%%%%%%%%%%%%%%%%%%%%%

Theorem~\ref{thm:base-interval} gives a non-degenerate interval contained in $\PiU$, but this interval is bounded away from $1$.
We now move copies of this interval toward $1$ using complete joins.
The operation is compatible with palette homomorphisms, so it can be applied simultaneously to every palette in a projective chain.

Let $P=(C,T)$ be a palette and let $M\ge2$.
The \emph{$M$-fold complete join} $J_M(P)$ has color set $[M]\times C$, viewed as $M$ labeled copies of $C$.
An ordered triple $\left( (i,a),(j,b),(k,c) \right)$ is admissible whenever its copy indices $i,j,k$ are not all equal; if $i=j=k$, it is admissible precisely when $(a,b,c)\in T$.
Thus all triples using at least two different copies are admitted, while the restriction to each individual copy is the original palette $P$.

The next lemma records both the effect of this operation on the Lagrangian and its compatibility with palette homomorphisms.

\begin{lemma}\label{lem:join-formula}
    For every finite palette $P$ and integer $M\ge2$,
    \begin{equation}\label{eq:join-formula}
        \lambda(J_M(P))=1-\frac{1-\lambda(P)}{M^2}.
    \end{equation}
    Moreover, every homomorphism $P\to Q$ induces a homomorphism $J_M(P)\to J_M(Q)$.
\end{lemma}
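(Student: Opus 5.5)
The plan is to compute the join polynomial directly. Write a weighting of $[M]\times C$ as $\mathbf{x}=(x_{(i,a)})$, set $\mu_i\coloneqq\sum_{a\in C}x_{(i,a)}$, and for $\mu_i>0$ let $\mathbf{x}^{i}\in\Delta_C$ be the normalized restriction to copy $i$. All ordered triples contribute total weight $1$. The inadmissible triples are exactly the monochromatic-copy triples $((i,a),(i,b),(i,c))$ with $(a,b,c)\notin T$. Hence
\[
\Lambda_{J_M(P)}(\mathbf{x})=1-\sum_{i=1}^M\mu_i^3\bigl(1-\Lambda_P(\mathbf{x}^{i})\bigr),
\]
where copies with $\mu_i=0$ contribute nothing.

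For the upper bound, use $\Lambda_P(\mathbf{x}^i)\le\lambda(P)$ in each term. This gives
\[
\Lambda_{J_M(P)}(\mathbf{x})\le 1-(1-\lambda(P))\sum_i\mu_i^3,
\]
because $1-\lambda(P)\ge0$. Since $\sum_i\mu_i=1$, the power-mean (or Hölder) inequality gives $\sum_i\mu_i^3\ge M\cdot M^{-3}=M^{-2}$. Therefore $\Lambda_{J_M(P)}(\mathbf{x})\le 1-(1-\lambda(P))/M^2$.

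For the matching lower bound, take an optimal weighting $\mathbf{y}\in\Delta_C$ with $\Lambda_P(\mathbf{y})=\lambda(P)$, which exists by compactness. Put $x_{(i,a)}=y_a/M$. Then every $\mu_i=1/M$ and every $\mathbf{x}^i=\mathbf{y}$, so the displayed identity gives equality. This proves Equation~\eqref{eq:join-formula}.

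For the homomorphism claim, given $f\colon P\to Q$, define $F(i,a)\coloneqq(i,f(a))$. An admissible triple of $J_M(P)$ with copy indices not all equal maps to a triple with the same copy indices, which is admissible in $J_M(Q)$. An admissible triple inside a single copy $i$ has $(a,b,c)\in T_P$, so $(f(a),f(b),f(c))\in T_Q$, and its image is admissible in copy $i$ of $J_M(Q)$.

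I expect no real obstacle. The only points needing care are bookkeeping ones: verifying that the ordered-triple count makes the total mass exactly $1$ (it is $(\sum_c x_c)^3$), and handling empty copies with $\mu_i=0$ in the normalization.
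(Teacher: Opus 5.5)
Your proposal is correct and follows the paper's proof essentially step for step. You decompose by copy weights, bound each within-copy term by $\lambda(P)\mu_i^3$, minimize $\sum_i\mu_i^3$ at the uniform copy weighting (attained by an optimal weighting of $P$ in every copy), and use the copywise map $(i,a)\mapsto(i,f(a))$ for the homomorphism claim.
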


\begin{proof}
    For a weighting $\mathbf{x}$ on $[M]\times C$, let $s_i$ be the total weight assigned to the $i$th copy of $C$, and write $\mathbf{s}=(s_1,\ldots,s_M)\in\Delta_M$.
    The total weight of all ordered triples is $1$.
    The triples whose three copy indices are equal have total weight $\sum_{i=1}^M s_i^3$, so the admissible triples using at least two different copies contribute $1-\sum_{i=1}^M s_i^3$. 
    Within the $i$th copy, the contribution is at most $\lambda(P)s_i^3$.
    For fixed $\mathbf{s}$, equality can be attained independently in every copy of positive weight by using a maximizing weighting for $P$; a copy of weight zero contributes nothing.
    Hence
    \[
    \lambda(J_M(P))
    =\max_{\mathbf{s}\in\Delta_M}
    \left(1-(1-\lambda(P))\sum_{i=1}^{M}s_i^3\right).
    \]
    Since $0\le\lambda(P)\le1$, this expression is maximized by minimizing $\sum_i s_i^3$ when $\lambda(P)<1$ and is identically $1$ when $\lambda(P)=1$.
    The inequality $\sum_{i=1}^M s_i^3\ge\frac1{M^2}$ holds for every $\mathbf{s}\in\Delta_M$, with equality at $\mathbf{u}_M$.
    This proves Equation~\eqref{eq:join-formula} in both cases.
    
    Finally, suppose that $f\colon P\to Q$ is a palette homomorphism.
    The map $(i,a)\mapsto(i,f(a))$ preserves the copy indices and applies $f$ within each copy.
    It therefore defines a homomorphism $J_M(P)\to J_M(Q)$.
\end{proof}

\begin{proof}[Proof of Theorem~\ref{thm:main}]
    Let $[a,b]\subseteq\PiU$ be the interval from Theorem~\ref{thm:base-interval}.
    Fix $M\ge2$ and $x\in[a,b]$.
    Choose the projective palette chain from the proof of Theorem~\ref{thm:base-interval} whose Lagrangians decrease to $x$, and apply $J_M$ to every palette in that chain.
    By Lemma~\ref{lem:join-formula}, the joined palettes still form a projective chain, and their Lagrangians decrease to $1-(1-x)/M^2$.
    Proposition~\ref{prop:projective-chain} therefore shows that $1-(1-x)/M^2\in\PiU$.
    As $x$ ranges over $[a,b]$, we obtain
    \[
    I_M\coloneqq
    \left[1-\frac{1-a}{M^2},\,1-\frac{1-b}{M^2}\right]
    \subseteq\PiU.
    \]
    
    Set $A\coloneqq1-a$ and $B\coloneqq1-b$.
    Then $A>B>0$ and $I_M=\left[1-\frac A{M^2},\,1-\frac B{M^2}\right]$. 
    Since $\left(\frac{M+1}{M}\right)^2\longrightarrow1<\frac AB$, there exists $M_0\ge2$ such that $\frac B{M^2}\le\frac A{(M+1)^2}$ for all $M\ge M_0$.
    Equivalently, the left endpoint of $I_{M+1}$ is at most the right endpoint of $I_M$.
    Hence the intervals $I_M$ for consecutive $M\ge M_0$ overlap.
    Their endpoints approach $1$, and therefore
    \[
    \bigcup_{M\ge M_0}I_M
    =\left[1-\frac A{M_0^2},1\right).
    \]
    Finally, $1=\pivvv(\varnothing)\in\PiU$.
    Thus $[1-\delta,1]\subseteq\PiU$ for $\delta\coloneqq\frac{1-a}{M_0^2}>0$, 
    which completes the proof.
\end{proof}

%%%%%%%%%%%%%%%%%%%%%%%%%%%%%%%%%%%%%%%%%%%%%%%%%%%%
\section{Concluding remarks}
\label{sec:concluding-remarks}
%%%%%%%%%%%%%%%%%%%%%%%%%%%%%%%%%%%%%%%%%%%%%%%%%%%%

We conclude by recording that the argument extends to every uniformity and, in
fact, produces one common terminal interval.
Fix an integer $r\ge3$.
For an $(r-2)$-graph $G$ on a vertex set $V$, put
\[
\mathcal K_r(G)
\coloneqq
\left\{e\in\tbinom{V}{r} \colon \tbinom e{r-2}\subseteq E(G)\right\}.
\]
An $r$-graph $H$ on $n$ vertices is \emph{$(d,\eta,r-2)$-dense} if
\[
|E(H)\cap\mathcal K_r(G)|
\ge d|\mathcal K_r(G)|-\eta n^r
\]
for every $(r-2)$-graph $G$ on $V(H)$.
For $r=3$, this is precisely the uniform density condition used throughout the
paper.
Let $\Pi_{\mathrm{u},\infty}^{(r)}$ be the set of the corresponding
$(r-2)$-uniform Tur\'an densities of possibly infinite families of $r$-graphs.

\begin{corollary}\label{cor:common-terminal-interval}
    The same constant $\delta>0$ obtained in the proof of Theorem~\ref{thm:main} satisfies
    \[
    [1-\delta,1]
    \subseteq
    \bigcap_{r\ge3}\Pi_{\mathrm{u},\infty}^{(r)}.
    \]
\end{corollary}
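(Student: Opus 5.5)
The plan is to reduce the $r$-uniform statement to the $3$-uniform machinery already built, by transporting palettes to higher uniformity. The natural tool is the $(r-2)$-uniform palette theory of Lin, Sun, Wang and Zhou~\cite{LinSunWangZhou2026}, which extends Theorem~\ref{thm:palette-characterization} to $(r-2)$-uniform Tur\'an densities of arbitrary families of $r$-graphs. There, a palette colors $(r-2)$-subsets, and an admissible pattern is an ordered $r$-tuple of colors indexed by the $\binom{r}{r-2}$ sets of an ordered $r$-set. I would first set up the $r$-uniform analogues of the three ingredients used in Section~\ref{sec:projective-limits}: the weighted random-palette construction (Lemma~\ref{lem:weighted-palette}), which goes through verbatim with McDiarmid's inequality applied to $(r-2)$-sets; palette separation (Theorem~\ref{thm:palette-separation}); and the characterization. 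With these in hand, the proof of Proposition~\ref{prop:projective-chain} repeats word for word. It gives that if $r$-palettes $P_{m+1}\to P_m$ have Lagrangians decreasing to $x$, then $x\in\Pi_{\mathrm{u},\infty}^{(r)}$.

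The key step is a lifting operation $\Phi_r$ from $3$-palettes to $r$-palettes with three properties: it is functorial for homomorphisms, and it satisfies $\lambda(\Phi_r(P))=\lambda(P)$ exactly. The intended definition colors each $(r-2)$-set $S$ of an ordered $r$-set $e=\{v_1<\dots<v_r\}$, which is determined by the pair $e\setminus S$. I would let $\Phi_r(P)$ admit an $r$-pattern exactly when some fixed triple of positions, namely the complements of the pairs $\{v_{r-2}v_{r-1}\}$, $\{v_{r-2}v_r\}$ and $\{v_{r-1}v_r\}$, carries a $P$-admissible triple, with all other coordinates free. Then $\Lambda_{\Phi_r(P)}(\mathbf x)=\Lambda_P(\mathbf x)$ because the free coordinates sum to $1$. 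Hence $\lambda$ is preserved, and a homomorphism $P\to Q$ applied coordinatewise is a homomorphism $\Phi_r(P)\to\Phi_r(Q)$.

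Applying $\Phi_r$ to the joined chains $J_M(P_n(\mathbf k))$ from the proof of Theorem~\ref{thm:main} gives projective chains of $r$-palettes whose Lagrangians decrease to $1-(1-x)/M^2$. Therefore each $I_M$ with $M\ge M_0$ lies in $\Pi_{\mathrm{u},\infty}^{(r)}$, and $1\in\Pi_{\mathrm{u},\infty}^{(r)}$ via the empty family. The constants $a$, $b$, $M_0$ and $\delta$ are unchanged, which gives the common interval.

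The main obstacle is whether the $r$-uniform separation theorem is available in the needed form, that is, whether absence of homomorphisms (up to the appropriate symmetries replacing $\rev$) yields a finite $r$-graph that is $Q$-colorable but not $P$-colorable. If it is not in the literature for $r>3$, I would instead avoid separation altogether. The fallback is to pass to forbidden families directly, taking $\mathcal F^{(r)}$ to be all $r$-graphs whose every $3$-dimensional link-type projection lies in the $3$-uniform family $\cF$. Equivalently, I would forbid every $r$-graph that is $\Phi_r(Q)$-colorable for some $Q$ with $\lambda(Q)>x$ but not $\Phi_r(P_{k})$-colorable. Then I would check via the characterization that any $r$-palette $Q'$ with $d(Q')>x$ admits a forbidden member. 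This check is the delicate part, because a general $r$-palette need not be of the form $\Phi_r(Q)$.
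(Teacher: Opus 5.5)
Your architecture matches the paper's. You extend a $3$-palette by constraining three coordinates and leaving the others free, which is the paper's $\operatorname{Cyl}_{3\to r}(P)=(C,T\times C^{r-3})$. You note that $\Lambda$ is unchanged and that homomorphisms extend coordinatewise. You apply $J_M$ before extending, and you rerun Proposition~\ref{prop:projective-chain} for $r$-palettes with the same $a,b,M_0,\delta$. The gap is in your palette formalism. For the $(r-2)$-uniform density the colors sit on $(r-1)$-sets. An edge $v_1<\dots<v_r$ has exactly $r$ faces $e\setminus\{v_j\}$, which is why patterns are $r$-tuples. You color $(r-2)$-sets instead, and an $r$-set has $\binom r2\ne r$ of these for $r\ge4$. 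For $r=3$ your convention would color vertices rather than pairs, so it does not even specialize to Section~\ref{se:palettes}.

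In your formalism, the claim that the random construction ``goes through verbatim'' is false. No palette in any of the chains admits a monochromatic triple: $P(D_0)$ has none, and doubling, matching deletions and joins create none. Let $G$ be the set of all $(r-2)$-sets receiving a fixed color $c$ of positive weight. Then with high probability $|\mathcal K_r(G)|=\Theta(n^r)$, yet every member has constrained coordinates $(c,c,c)$ and is not an edge. So $H$ is not $(d,\eta,r-2)$-dense for any $d>0$ once $\eta$ is small. Quantitatively, recoloring one $(r-2)$-set changes $\Theta(n^2)$ $r$-sets. McDiarmid then only gives tails of order $\exp(-c_r\eta^2n^{r-2})$, which cannot absorb the union bound over $2^{\binom n{r-2}}$ choices of $G$. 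The Lin--Sun--Wang--Zhou characterization you invoke for the upper bound is likewise about colorings of $(r-1)$-sets, so it does not apply to your $\Phi_r(P)$ as defined.

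The repair is immediate. Color $(r-1)$-sets and constrain three of the $r$ faces, for example $\bigl(\phi(e\setminus\{v_r\}),\phi(e\setminus\{v_{r-1}\}),\phi(e\setminus\{v_{r-2}\})\bigr)\in T$, as the paper does. A recolored $(r-1)$-set then lies in fewer than $n$ $r$-sets, so McDiarmid gives tails $\exp(-2\eta^2n^{r-1})$, and these beat $2^{\binom n{r-2}}$.

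The obstacle you flag is not one. The $r$-uniform separation theorem and the arbitrary-family characterization are both in \cite[Theorems~1.4 and~1.6]{LinSunWangZhou2026}, which is exactly what the paper cites. Reversing the vertex order just reverses the face tuple, so $\rev$ plays the same role as before. Since $\lambda(\rev(P))=\lambda(P)$, Lagrangian monotonicity still excludes homomorphisms $Q_j\to P_{k_j}$ and $Q_j\to\rev(P_{k_j})$. The blow-up step also transfers, since every edge of a balanced blow-up meets $r$ distinct clusters. Hence the proof of Proposition~\ref{prop:projective-chain} carries over unchanged. Your fallback, which you rightly call incomplete, is unnecessary.
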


\begin{proof}
Only a transfer argument is needed.
In the natural $r$-palette formalism, colors are assigned to $(r-1)$-sets by a map $\phi\colon\binom{V}{r-1}\to C$, and an admissible set $T'\subseteq C^r$ prescribes the ordered face-color patterns of the edges.
For an edge $e=\{v_1,\ldots,v_r\}$ with $v_1<\cdots<v_r$, we record these colors in the order
\[
\bigl(\phi(e\setminus\{v_r\}),\phi(e\setminus\{v_{r-1}\}),\ldots,
\phi(e\setminus\{v_1\})\bigr).
\]
This is the reverse of the coordinate order in~\cite{LinSunWangZhou2026}, so for $r=3$ it agrees with the convention in Section~\ref{se:palettes}; reversing all coordinates translates their results into the convention used here.
For an $r$-palette $P'=(C,T')$ and a weighting $\mathbf{x}\in\Delta_C$, define
\[
\Lambda_{P'}(\mathbf{x})
\coloneqq\sum_{(c_1,\ldots,c_r)\in T'}\prod_{j=1}^r x_{c_j},
\quad\text{and}\quad 
\lambda(P')\coloneqq\max_{\mathbf{x}\in\Delta_C}
\Lambda_{P'}(\mathbf{x}).
\]
Its unweighted density is $d(P')\coloneqq |T'|/|C|^r=\Lambda_{P'}(\mathbf{u}_C)$.
Given a $3$-palette $P=(C,T)$, define its \emph{cylindrical extension} to uniformity $r$ by $\operatorname{Cyl}_{3\to r}(P) \coloneqq \bigl(C,T\times C^{r-3}\bigr)$. 
Thus the first three coordinates are constrained by $P$, while the remaining $r-3$ coordinates are unrestricted.
For every $\mathbf{x}\in\Delta_C$, the corresponding palette polynomial
satisfies
\[
\Lambda_{\operatorname{Cyl}_{3\to r}(P)}(\mathbf{x})
=
\Lambda_P(\mathbf{x})
\Big(\sum_{c\in C}x_c\Big)^{r-3}
=
\Lambda_P(\mathbf{x}).
\]
It follows, by evaluating at the uniform weighting and by maximizing over $\Delta_C$, that cylindrical extension preserves the unweighted density and the Lagrangian.
Moreover, every homomorphism $P\to Q$ induces a homomorphism $\operatorname{Cyl}_{3\to r}(P) \longrightarrow \operatorname{Cyl}_{3\to r}(Q)$ by applying the same map in every coordinate.

Consequently, if $P_1,P_2,\ldots$ is a projective chain of $3$-palettes, then the palettes $\operatorname{Cyl}_{3\to r}(P_1), \operatorname{Cyl}_{3\to r}(P_2),\ldots$ form a projective chain of $r$-palettes with the same Lagrangians.
The weighted random-palette construction in Lemma~\ref{lem:weighted-palette} extends verbatim to $r$-palettes, using independent colors on $(r-1)$-sets.
Together with the palette separation and arbitrary-family characterization theorems of Lin, Sun, Wang and Zhou~\cite[Theorems~1.4 and~1.6]{LinSunWangZhou2026}, this shows that the proof of Proposition~\ref{prop:projective-chain} applies unchanged to projective chains of $r$-palettes.
It follows that every decreasing Lagrangian limit constructed above belongs to $\Pi_{\mathrm{u},\infty}^{(r)}$ for every $r\ge3$.
In particular, the interval $[a,b]$ from
Theorem~\ref{thm:base-interval} occurs simultaneously in all uniformities.

To obtain a common terminal interval, we first apply the complete join at the $3$-palette level and only then take the cylindrical extension.
If $\lambda(P_n)\downarrow x$, then Lemma~\ref{lem:join-formula} gives $\lambda(J_M(P_n)) \downarrow 1-\frac{1-x}{M^2}$ and cylindrical extension leaves this limit unchanged. Thus every interval $I_M$ from the proof of Theorem~\ref{thm:main} is contained in $\Pi_{\mathrm{u},\infty}^{(r)}$ for every $r\ge3$.
The overlap argument depends only on $a$ and $b$, so the same $M_0$ and the same value $\delta=(1-a)/M_0^2$ work in every uniformity.
Performing the join before the cylindrical extension is what keeps the exponent $2$ in the join formula, and hence keeps the resulting terminal interval independent of $r$.
\end{proof}

%%%%%%%%%%%%%%%%%%%%%%%%%%%%%%%%%%%%%%%%%%%%%%%%%%%%
\section*{Acknowledgments}

H.L. was supported by the National Natural Science Foundation of China (12501487), by the China Scholarship Council, and by the Institute for Basic Science (IBS-R029-C4).  X.L. was supported by the Excellent Young Talents Program (Overseas) of the National Natural Science Foundation of China. O.P. was supported by ERC Advanced Grant 101020255.

\section*{Declaration on the use of AI}

The authors used ChatGPT 5.6 Pro to assist in discussing proof strategies, checking proofs, and improving exposition.

%%%%%%%%%%%%%%%%%%%%%%%%%%%%%%%%%%%%%%%%%%%%%%%%%%%%
\bibliographystyle{abbrv}
\bibliography{uniform}
%%%%%%%%%%%%%%%%%%%%%%%%%%%%%%%%%%%%%%%%%%%%%%%%%%%%

\end{document}